\newif\ifdraft
\definecolor{labelkey}{gray}{0.5}
\newlength{\myarrowsize} 
\newenvironment{diagram*}[2]{%
\[%
\begin{tikzpicture}[>=cmto,baseline=(current bounding box.center),%
	to/.style={->,font=\scriptsize,cap=round},%
	into/.style={cmhook->,font=\scriptsize,cap=round},%
	onto/.style={-cmonto,font=\scriptsize,cap=round},%
	math/.style={matrix of math nodes, row sep=#2, column sep=#1,%
		text height=1.5ex, text depth=0.25ex}]%
}{%
\end{tikzpicture}%
\]%
\ignorespacesafterend%
}
\newcommand{\ZZ}{\mathbb{Z}}
\newcommand{\CC}{\mathbb{C}}
\newcommand{\PP}{\mathbb{P}}
\DeclareMathOperator{\im}{im}
\DeclareMathOperator{\alb}{alb}
\def\overbar#1#2#3{{%
	\setbox0=\hbox{\displaystyle{#1}}%
	\dimen0=\wd0
	\advance\dimen0 by -#2 
	\vbox {\nointerlineskip \moveright #3 \vbox{\hrule height 0.3pt width \dimen0}%
		\nointerlineskip \vskip 1.5pt \box0}%
}}
\let\@@seccntformat\@seccntformat
\renewcommand*{\@seccntformat}[1]{%
  \expandafter\ifx\csname @seccntformat@#1\endcsname\relax
    \expandafter\@@seccntformat
  \else
    \expandafter
      \csname @seccntformat@#1\expandafter\endcsname
  \fi
    {#1}%
}
\newcommand*{\@seccntformat@subsection}[1]{%
  \textbf{\csname the#1\endcsname.}
}
\let\@paragraph\paragraph
\renewcommand*{\paragraph}[1]{%
	\vspace{0.3\baselineskip}%
	\@paragraph{\textit{#1}}%
}
\newtheorem{theorem}[equation]{Theorem}
\newtheorem*{theorem*}{Theorem}
\newtheorem{lemma}[equation]{Lemma}
\newtheorem*{lemma*}{Lemma}
\newtheorem{proposition}[equation]{Proposition}
\newtheorem*{proposition*}{Proposition}
\theoremstyle{definition}
\newtheorem{definition}[equation]{Definition}
\newtheorem*{definition*}{Definition}
\theoremstyle{remark}
\newtheorem{remark}[equation]{Remark}
\newtheorem*{example*}{Example}
\newtheorem*{problem*}{Problem}
\theoremstyle{plain}
\newcommand{\theoremref}[1]{\hyperref[#1]{Theorem~\ref*{#1}}}
\newcommand{\lemmaref}[1]{\hyperref[#1]{Lemma~\ref*{#1}}}
\newcommand{\definitionref}[1]{\hyperref[#1]{Definition~\ref*{#1}}}
\newcommand{\propositionref}[1]{\hyperref[#1]{Proposition~\ref*{#1}}}
\newcommand{\conjectureref}[1]{\hyperref[#1]{Conjecture~\ref*{#1}}}
\newcommand{\corollaryref}[1]{\hyperref[#1]{Corollary~\ref*{#1}}}
\newcommand{\exampleref}[1]{\hyperref[#1]{Example~\ref*{#1}}}
\let\old@caption\caption
\renewcommand*{\caption}[1]{%
	\setcounter{figure}{\value{equation}}%
	\stepcounter{equation}%
	\old@caption{#1}\relax%
}
\newcounter{intro}
\newtheorem{intro-conjecture}[intro]{Conjecture}
\newtheorem{intro-corollary}[intro]{Corollary}
\newtheorem{intro-theorem}[intro]{Theorem}
\newcommand{\parref}[1]{\hyperref[#1]{\S\ref*{#1}}}
\newcommand*\if@single[3]{%
  \setbox0\hbox{${\mathaccent"0362{#1}}^H$}%
  \setbox2\hbox{${\mathaccent"0362{\kern0pt#1}}^H$}%
  \ifdim\ht0=\ht2 #3\else #2\fi
  }
\newcommand*\rel@kern[1]{\kern#1\dimexpr\macc@kerna}
\newcommand*\widebar[1]{\@ifnextchar^{{\wide@bar{#1}{0}}}{\wide@bar{#1}{1}}}
\newcommand*\wide@bar[2]{\if@single{#1}{\wide@bar@{#1}{#2}{1}}{\wide@bar@{#1}{#2}{2}}}
\newcommand*\wide@bar@[3]{%
  \begingroup
  \def\mathaccent##1##2{%
    \if#32 \let\macc@nucleus\first@char \fi
    \setbox\z@\hbox{$\macc@style{\macc@nucleus}_{}$}%
    \setbox\tw@\hbox{$\macc@style{\macc@nucleus}{}_{}$}%
    \dimen@\wd\tw@
    \advance\dimen@-\wd\z@
    \divide\dimen@ 3
    \@tempdima\wd\tw@
    \advance\@tempdima-\scriptspace
    \divide\@tempdima 10
    \advance\dimen@-\@tempdima
    \ifdim\dimen@>\z@ \dimen@0pt\fi
    \rel@kern{0.6}\kern-\dimen@
    \if#31
      \overline{\rel@kern{-0.6}\kern\dimen@\macc@nucleus\rel@kern{0.4}\kern\dimen@}%
      \advance\dimen@0.4\dimexpr\macc@kerna
      \let\final@kern#2%
      \ifdim\dimen@<\z@ \let\final@kern1\fi
      \if\final@kern1 \kern-\dimen@\fi
    \else
      \overline{\rel@kern{-0.6}\kern\dimen@#1}%
    \fi
  }%
  \macc@depth\@ne
  \let\math@bgroup\@empty \let\math@egroup\macc@set@skewchar
  \mathsurround\z@ \frozen@everymath{\mathgroup\macc@group\relax}%
  \macc@set@skewchar\relax
  \let\mathaccentV\macc@nested@a
  \if#31
    \macc@nested@a\relax111{#1}%
  \else
    \def\gobble@till@marker##1\endmarker{}%
    \futurelet\first@char\gobble@till@marker#1\endmarker
    \ifcat\noexpand\first@char A\else
      \def\first@char{}%
    \fi
    \macc@nested@a\relax111{\first@char}%
  \fi
  \endgroup
}
\def\ZZ{{\mathbf Z}}
\def\CC{{\mathbf C}}
\def\PP{{\mathbf P}}
\def\GG{{\mathbf G}}
\newcommand{\I}{\mathcal{I}}
\DeclareMathOperator{\red}{red}
\DeclareMathOperator{\sing}{sing}
\newcommand{\marg}[1]{\normalsize{{\color{red}\footnote{{\color{blue}#1}}}{\marginpar[\vskip -.3cm {\color{red}\hfill\tiny\thefootnote$\rightarrow$}]{\vskip -.3cm{ \color{red}$\leftarrow$\tiny\thefootnote}}}}}
\newcommand{\Yano}[1]{\marg{(Yano) #1}}
\newcommand{\Stefan}[1]{\marg{(Stefan) #1}}
\newcommand{\Mihnea}[1]{\marg{(Mihnea) #1}}
\newif\ifcomments
\renewcommand{\Yano}[1]{}
\renewcommand{\Stefan}[1]{}
\renewcommand{\Mihnea}[1]{}
\begin{document}

\vspace{\baselineskip}


\title[Generic vanishing conjecture in dimension five]{Generic vanishing and minimal cohomology classes on abelian fivefolds}

\author[Casalaina-Martin]{Sebastian Casalaina-Martin }
\address{University of Colorado, Department of Mathematics,  Campus Box 395,
Boulder, CO 80309-0395, USA}
\email{casa@math.colorado.edu}

\author[Popa]{Mihnea Popa}
\address{Department of Mathematics, Northwestern University,
2033 Sheridan Road, Evanston, IL 60208, USA} 
\email{\tt mpopa@math.northwestern.edu}

\author[Schreieder]{Stefan Schreieder}
\address{Mathematical Institute, University of Bonn, Endenicher Allee 60, 53115 Bonn, Germany}
\email{\tt schreied@math.uni-bonn.de}

\subjclass[2000]{14K12; 14H42; 14F17}

\thanks{SC was partially supported by Simons Foundation Collaboration Grant for Mathematicians (317572).  
MP was partially supported by the NSF grant DMS-1405516 and by a Simons Fellowship. 
}


\setlength{\parskip}{0.2\baselineskip}

\maketitle

\begin{abstract}
We classify $GV$-subschemes of five-dimensional  ppavs, 
 proving the main conjecture in \cite{PP3} in this case.
This result is implied by a more general statement about subvarieties of minimal cohomology class whose sum is a theta
divisor. 
\end{abstract}

\section{Introduction}

The aim of this paper is to give a proof of the main conjecture in \cite{PP3} in the case of five-dimensional principally polarized complex  abelian varieties (ppavs).   

\begin{intro-theorem}\label{conjecture}
Let $(A, \Theta)$ be an indecomposable ppav of dimension $5$. If $1 \le d \le 3$, then $X$ is a geometrically nondegenerate closed $GV$-subscheme of $A$ of dimension $d$ if and only if one of the following holds: 

\begin{enumerate}
\item[(a)] There is a smooth genus $5$ curve $C$ and an isomorphism $(A,\Theta)\cong (JC,\Theta_C)$ that  identifies $X$ with $W_d(C)$. 
\item[(b)] $d=2$ and there is a smooth cubic threefold $Y$ and an isomorphism $(A,\Theta)\cong (JY,\Theta_Y)$ that identifies $X$ with $F$, the Fano surface of lines on $Y$. 
\end{enumerate}
\end{intro-theorem}

Recall that $X \subset A$ is called a $GV$-subscheme if $\I_X (\Theta)$ is a $GV$-sheaf on $A$ in the sense of \cite{PP4}; concretely, 
this means that 
$${\rm codim}~ \{ \alpha \in {\rm Pic}^0 (A) ~|~ H^i (A, \I_X (\Theta) \otimes \alpha ) \neq 0 \} \ge i \,\,\,\,\,\,{\rm for ~all} \,\,\,\,i \ge 0.$$
This property is a formal analogue of $2$-regularity in projective space, as we will recall  below.    
When $\dim X = 1$ or $3$,  the result in \theoremref{conjecture} already follows from \cite[Theorem C]{PP3}. The (much more difficult) case addressed here is that of $\dim X = 2$; this turns out to be a consequence of the following more general statement, which is the main result of the paper.

\begin{intro-theorem}\label{main}
Let $(A, \Theta)$ be an indecomposable ppav of dimension $5$, and let $V, W\subset A$ be two-dimensional subvarieties, such that $[V] = [W] = \frac{\theta^3}{3!}$ and $V  + W = \Theta$. Then one of the following holds: 

\begin{enumerate}\item[(a)] There is a smooth genus $5$ curve $C$ and an isomorphism $(A,\Theta)\cong (JC,\Theta_C)$ that identifies $V$ with $W_2(C)$.
\item[(b)] There is a smooth cubic threefold $Y$ and an isomorphism $(A,\Theta)\cong (JY,\Theta_Y)$ that identifies $V$ with $F$, the 
Fano surface of lines on $Y$. 
\end{enumerate}
\end{intro-theorem}

In the statements of both theorems we are referring to the Abel--Jacobi embedding of the Fano surface of lines in $JY$, 
which also happens to be isomorphic to ${\rm Alb}(F)$; see \cite{CG}.

\subsection*{Context and previous work}
Recall the combination of the main conjectures in \cite{Debarre} and \cite{PP3}. 

\begin{intro-conjecture}\label{full_conjecture}
Let $(A, \Theta)$ be an indecomposable ppav of dimension $g$, and let  $X$ be a closed subscheme of $A$ of dimension $1 \le d \le g-2$. The following are equivalent:

\begin{enumerate}
\item $X$ is reduced of pure dimension and has minimal cohomology class, i.e. $[X] = \frac{\theta^{g-d}}{(g-d)!}$.
\item $X$ is a geometrically nondegenerate  $GV$-subscheme, i.e. $X$ is geometrically nondegenerate and  $\I_X (\Theta)$ is a $GV$-sheaf on $A$.
\item One of the following holds: 
\begin{enumerate}\item[(a)] There is a smooth genus $g$ curve $C$ and an isomorphism $(A,\Theta)\cong (JC,\Theta_C)$ that identifies $X$ with $W_d(C)$.
\item[(b)] $g=5$, $d =2$, and there is a smooth cubic threefold $Y$ and an isomorphism $(A,\Theta)\cong (JY,\Theta_Y)$ that  identifies $X$ with $F$, the 
Fano surface of lines on $Y$. 
\end{enumerate}
\end{enumerate}
\end{intro-conjecture}

Establishing the equivalence between $(1)$ and $(3)$ would be a vast generalization of the Matsusaka--Ran criterion, which is the special case of the conjecture where $d=1$;   this was proposed by Debarre \cite{Debarre}, building also on previous questions of Beauville and Ran. The equivalence with $(2)$ was proposed in \cite{PP3}, in order to provide a more manageable cohomological bridge between the two conditions. Note that in $\PP^n$ there are quite familiar analogues of these equivalences; we recall this next:

(i) A formal analogy between the subvarieties of $(A, \Theta)$  in (3) and subvarieties of minimal degree in $\PP^n$ is described in \cite[\S2(6)]{PP2}: the $W_d(C)$ in Jacobians correspond to rational normal scrolls, while in the five-dimensional case the Fano surface of lines corresponds to the Veronese surface. Thus the equivalence of (1) and (3) corresponds under this formal analogy to the classification of subvarieties of minimal degree in $\PP^n$.

(ii) On the other hand, as described in 
\cite[\S2(7)]{PP2},  work in \cite{PP1} shows that condition (2) corresponds formally
 to the $2$-regularity of $\I_X$ in the sense of Castelnuovo--Mumford, for a subvariety $X \subset \PP^n$.
  Therefore, the equivalence of (2) with the other conditions is the analogue of another well-known property in projective space:  a subvariety in $\PP^n$ has minimal degree if and only if its ideal sheaf is $2$-regular.

Regarding what is known, the equivalence of $(1)$ and $(3)$ for $d=1$ is the Matsusaka--Ran criterion, while in dimension $4$ it holds for all $d$ 
by \cite{Ran}. Debarre \cite{Debarre} also shows that it holds when $(A, \Theta)$ is already assumed to be the Jacobian of a curve. 
Similarly, H\"oring  \cite[Theorem 1.2]{Hoering2} shows that it holds when $(A,\Theta)$ is assumed to be a generic intermediate Jacobian of a cubic threefold.  
The main results of \cite{PP3} are that $(2) \implies (3)$ holds for $d =1,  g-2$, and that $(2)$ implies $(1)$ apart from the reducedness assertion, which is proven in \cite{Schreieder2}.
Condition (3) is known to imply the others; see the introduction to \cite{PP3} for an explanation and references.
In particular, \conjectureref{full_conjecture} is fully known in dimension up to $4$, and for $d=1$ in general.

As mentioned above, in this paper we prove the implication $(2) \implies (3)$ in dimension $5$. This has a special significance: besides being the next case that was not known,  it is also arguably the most interesting, due to the exceptional appearance of the Fano surface of lines. 
\theoremref{conjecture} and \theoremref{main} provide the first known criteria which detect all
Jacobians of smooth genus five curves and intermediate Jacobians of cubic
threefolds at the same time.
This also strengthens the hope for obtaining the predicted list of varieties of minimal class; we expect the implication $(1) \implies (2)$ to be a formal, albeit difficult, fact.

\subsection*{Ingredients and outline of the proof of \theoremref{main}}

The proof of \theoremref{main} relies on a number of different ingredients. Here is a rough sketch of the main steps:

(1) If $V$ or $W$ is singular,  
we show in the proof of \theoremref{thm:V=sing}  that
the singular locus $\Theta^{\sing}$ of $\Theta$ is at least two-dimensional. 
Since $(A, \Theta)$ is a generalized Prym variety, and assumed to be indecomposable, a theorem of Beauville \cite{Beauville1} implies that it must be the Jacobian of a hyperelliptic curve $C$.
Since $V$ has minimal cohomology class, Debarre's theorem then implies that $V$ corresponds to $W_2(C)$.
We can therefore assume in what follows that $V$ and $W$ are smooth.

(2) If $V$ is smooth and $V+V=\Theta$ (i.e., we assume $W=V$),  then we prove in \theoremref{thm:V=W} that $(A, \Theta)$ is isomorphic to the Jacobian of a (necessarily nonhyperelliptic) curve.
The outline  is as follows.  By \cite{Schreieder2}, $V$ has property $(\mathcal P)$ with respect to itself (see \definitionref{def:propP}), and we use this to show that $V$ has a curve summand.
Therefore $\Theta$ has a curve summand and so the main result of \cite{Schreieder1} implies that $(A,\Theta)$ is isomorphic to the Jacobian of a smooth curve.

(3) If $V$ is smooth and $V-V=\Theta$ (i.e., we assume $W=-V$),  then we prove in \theoremref{T:cubics} that $(A, \Theta)$ is isomorphic to the intermediate Jacobian of a smooth cubic threefold $Y$.
 This is done by analyzing the multiplicity of $\Theta$ at the origin, using intersection theory and Schubert calculus associated with the 
 difference map $g:V\times V\rightarrow V-V$; we obtain a point of multiplicity $3$, at which stage results from \cite{Casalaina1, Casalaina2} imply the desired characterization of $(A,\Theta)$.
 Further arguments show that one can choose this isomorphism so   as to identify $V$ with the Fano surface of lines on $Y$.

(4) In \theoremref{thm:degg}, the final step of the proof, we reduce \theoremref{main} to the cases treated above.
To this end,  we consider again the difference map  
$$g:V\times V\longrightarrow V-V.$$ 
Since $V$ has minimal class, the map  $g$ is generically finite of degree $1$, $2$, $3$ or $6$.
We prove that $\deg(g)=2$ or $3$ is impossible, that $\deg(g)=1$ implies $V+V=\Theta$ and that $\deg(g)=6$ implies $V-V=\Theta$.
The surface $W$ plays  a crucial role in the argument.  We prove that 
$V-V=W-W$ and consider the pre-images $g^{-1}(W-w)$, for $w\in W$.  
The argument makes essential use of the fact from \cite{Schreieder2} (see \theoremref{thm:propP} below)  that $V$ has property $(\mathcal P)$ with respect to $W$.
The most difficult part is to rule out $\deg(g)=2$ or $3$.
Here the main strategy is to construct a curve summand of $V$ or $W$, which implies via \cite{Schreieder1} that $(A,\Theta)$ is isomorphic to the Jacobian of a smooth curve $C$;   Debarre's result \cite{Debarre} for Jacobians implies that $V$ is a translate of $W_2(C)$, which leads to a contradiction, since the difference map then has degree $1$.

\subsection*{Proof of \theoremref{conjecture}}
Once \theoremref{main} is proven, \theoremref{conjecture} is obtained  via the following argument; we recall that it suffices to treat the case of $GV$-subschemes of dimension $d=2$, since the cases $d=1$ and $3$ are already known by \cite{PP3}. 

As in \cite{PP3}, given a closed reduced subscheme $X$ of a ppav $(A,\Theta)$, we can consider the associated  \emph{theta-dual} scheme $V(X)$; set theoretically we have 
$$V(X)=\{x\in A: X-x\subseteq \Theta\}.$$
If $X$ is a geometrically nondegenerate closed $GV$-subscheme of dimension two,  we know from \cite{PP3} that $[X]= \frac{\theta^3}{3!}$, and that $V(X)$ is two-dimensional and of the same minimal class.
(In \cite{PP3}, $X$ is assumed to be pure-dimensional and reduced, but the same proofs work without these assumptions -- pure-dimensionality turns out to be automatic, see also \cite[Section 2.5]{Schreieder2}.) 
If $(A,\Theta)$ is indecomposable, then by  \cite{Schreieder2} both $X$ and $V(X)$ are irreducible and reduced.
This in turn implies $X-V(X)=\Theta$ by the nondegeneracy of $X$ and the definition of $V(X)$; see also \lemmaref{lem:deg(f)} below.
At this point, \theoremref{main} applies with $V=X$ and $W=-V(X)$ to give \theoremref{conjecture}. 

\subsection*{Conventions}
We work over the field of complex numbers $\CC$. 
A \emph{variety} is an integral separated scheme of finite type over $\CC$.
A \emph{curve} (resp.\ \emph{surface}) is a variety of dimension one (resp.\ two).
Unless mentioned otherwise, all varieties will be assumed to be proper.
A general point of a variety is a point of a Zariski open and dense subset.

\subsection*{Acknowledgements}
Part of this work was carried out during the 2015 AMS Algebraic Geometry Summer Research Institute at the University of Utah.  
The second author would like to thank Giuseppe Pareschi for conversations 
about this subject over the years, and to the University of Michigan for hospitality  during part of the preparation of the paper. 
We are grateful to Zhi Jiang, who pointed out a gap in Section \ref{subsec:deg=3} of a previous version of this paper, and a referee for useful suggestions.

\section{Preliminaries}

\subsection{Nondegeneracy}\label{scn:nondegeneracy}
We recall that, according to \cite{Ran}, a closed subscheme $X$ of dimension $d$ of an abelian variety $A$ of dimension $g$ is called \emph{nondegenerate} in $A$ if the cup product map 
\begin{equation}\label{E:NonDegCup}
\cup[X]:H^{0,d}(A)\longrightarrow H^{g-d,g}(A)
\end{equation}
is injective (hence an isomorphism). 
This definition depends only on the $d$-dimensional components of $X$, since $\omega\cup [X']=0$ for all $\omega\in H^{d,0}(A)$ if $\dim(X')<d$.
If $X$ is pure-dimensional and reduced, it is shown in \cite{Ran} that this is equivalent to the image of the Gauss map 
$$\Gamma: X^{{\rm reg}} \rightarrow \GG (d-1, g-1)$$ 
not being contained in any hyperplane via the Pl\"ucker embedding, which is further equivalent to the injectivity of the restriction map
$H^0 (A, \Omega_A^d) \rightarrow H^0 (X^{{\rm reg}}, \Omega_{X^{{\rm reg}}}^d)$, where $X^{\rm{ reg}}$ is the smooth locus of $X$.

Our main case of interest is the following:  a codimension $k$  subscheme  of a ppav $(A,\Theta)$ is nondegenerate if its cohomology class is a multiple of the minimal class $\frac{\theta^k}{k!}$, as  follows directly from the definition above.

\theoremref{conjecture} is phrased in terms of the weaker notion of \emph{geometric nondegeneracy}, which means that the kernel of the above cup product map \eqref{E:NonDegCup} 
contains no decomposable elements. 
This notion also originated in Ran's work \cite{Ran}; it is treated in the present generality in \cite{Schreieder2}. 
It turns out that $X$ is geometrically nondegenerate if and only if the reduced scheme $X^{\red}$ is.
Moreover, if $X$ is reduced, then it is geometrically nondegenerate if and only if the kernel of the restriction map
$$H^0 (A, \Omega_A^d) \longrightarrow H^0 (X^{{\rm reg}}, \Omega_{X^{{\rm reg}}}^d)$$ 
does not contain any decomposable elements, see \cite[Section 2.4]{Schreieder2}. 
However, geometrically nondegenerate $GV$-subschemes are known to represent minimal cohomology classes \cite{PP3}; it follows \emph{a posteriori} that they are nondegenerate.  

\subsection{The sum of two surfaces of minimal class}
Let $V$ and $W$ be closed subschemes of an abelian variety $A$.
The sum $V+W$ denotes the image of the addition morphism 
$$+ : V\times W\longrightarrow A;$$ 
it has the expected dimension if $\dim(V+W)=\dim(V)+\dim(W)$ or $V+W=A$. 

The sum of a geometrically nondegenerate subscheme with any other subscheme of an abelian variety has the expected dimension;  see \cite[Ch.8, ~Corollary 11]{Debarre2} for the case of subvarieties and \cite[Lemma 5]{Schreieder2} in general. 
The following well-known lemma yields a more precise answer in the case where we consider the sum of two surfaces of minimal class.

\begin{lemma} \label{lem:deg(f)}
Let $(A,\Theta)$ be a ppav of dimension $5$, and let $V,W\subseteq A$ be surfaces of minimal cohomology class, i.e.  
$[V]=[W]=\frac{\theta^{3}}{3!}$.
Then $[V+W]=\lambda\cdot \theta$ for some  $\lambda\in \{1,2,3,6\}$.
Moreover, the addition morphism 
$$f:V\times W\longrightarrow V+W$$ is generically finite of degree $ \deg(f)=\frac{ 6}{\lambda}$.
\end{lemma}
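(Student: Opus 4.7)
My plan is to combine a dimension count (to force $f$ to be generically finite) with a Pontryagin product computation (which pins down $[V+W]$ and $\deg f$ in a single identity), and then conclude by integrality.

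As a first step I would check that $f$ is generically finite. Because $[V] = \theta^3/3!$ is a positive multiple of the minimal class, $V$ is geometrically nondegenerate in the sense of \parref{scn:nondegeneracy}, so by the result recalled above (\cite[Ch.\,8, Corollary~11]{Debarre2} or \cite[Lemma~5]{Schreieder2}) the sum $V+W$ has the expected dimension $\dim V + \dim W = 4$. Since $V \times W$ is also four-dimensional and $f$ is proper and surjective onto $V+W$, this already forces $f$ to be generically finite, and I would set $d := \deg(f)$.

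Next I would push $[V \times W]$ forward through $f$. Writing $f = \mu|_{V \times W}$, where $\mu \colon A \times A \to A$ is the addition morphism and $p_1, p_2$ are the two projections, the projection formula yields
$$
d \cdot [V+W] \,=\, f_{\ast}[V \times W] \,=\, \mu_{\ast}\bigl(p_1^{\ast}[V] \cdot p_2^{\ast}[W]\bigr) \,=\, [V] \ast [W]
$$
in $H^2(A,\ZZ)$, where $\ast$ denotes the Pontryagin product. Since the Pontryagin product of polynomial expressions in $\theta$ is universal across ppavs, its value on minimal classes can be read off in the Jacobian of a smooth genus-five curve $C$: there one takes $V = W = W_2(C)$, the addition $W_2(C) \times W_2(C) \to W_4(C)$ is generically finite of degree $\binom{4}{2} = 6$, and $[W_4(C)] = \theta$ by Poincar\'e's formula. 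Hence
$$
\frac{\theta^3}{3!} \ast \frac{\theta^3}{3!} \,=\, 6\,\theta.
$$

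Combining the two displays gives $d \cdot [V+W] = 6\,\theta$ in the torsion-free group $H^2(A, \ZZ)$. Since $\theta$ is primitive (the polarization being principal), this forces $d \mid 6$; so $d \in \{1,2,3,6\}$ and $[V+W] = \lambda\,\theta$ with $\lambda = 6/d \in \{1,2,3,6\}$, giving the claimed identity $\deg(f) = 6/\lambda$. The only step where I foresee needing care is the universality of the Pontryagin formula on minimal classes; if specialization to a Jacobian is unsatisfactory, the Fourier--Mukai transform on $H^{\ast}(A, \QQ)$ (which exchanges Pontryagin with cup product and sends $\theta^k/k!$ to $\pm\,\theta^{g-k}/(g-k)!$) produces the coefficient $6$ directly, without any reference to Jacobians.
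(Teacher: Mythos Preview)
Your proof is correct and is essentially the same as the paper's: both reduce the lemma to the Pontryagin identity $[V]\ast[W]=\deg(f)\cdot[V+W]$ together with the computation $\frac{\theta^3}{3!}\ast\frac{\theta^3}{3!}=6\theta$ (the paper simply cites \cite[Corollary~16.5.8]{BL} for the latter, while you argue it via specialization to a Jacobian or via Fourier--Mukai). Your additional remarks on generic finiteness and on the primitivity of $\theta$ just spell out details the paper leaves implicit.
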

\begin{proof}
This follows from 
$$
6\cdot \theta=\frac{\theta^{3}}{3!}\star \frac{\theta^{3}}{3!}=[V]\star [W]=\deg(f)\cdot [V+W] ,
$$ 
where $\star$ denotes the Pontryagin product on the cohomology of $A$ (see e.g., \cite[Corollary 16.5.8]{BL}).  
\end{proof}

\subsection{The property $(\mathcal P)$}
Let $V$ and $W$ be subvarieties of an abelian variety $A$. 
The following definition can be found in Debarre's paper \cite{Debarre}; it originated in Ran's work \cite{Ran,Ran2}.

\begin{definition} \label{def:propP}
Let $f:V\times W\rightarrow A$ be the addition map. 
We say that $V$ has property $(\mathcal P)$ with respect to $W$ if for general $v\in V$, the only subvariety of $f^{-1}(v+W)$ which dominates $W$ via the second projection and $v+W$ via $f$ is $v\times W$.
\end{definition}

The condition in the above definition implies that for general $v\in V$, the only component of the reduced preimage $f^{-1}(v+W)^{\red}$ which dominates $W$ via the second projection and $v+W$ via $f$ is $v\times W$.
In particular, $f$ is generically finite.
Moreover, if $\deg(f)=1$, then the property $(\mathcal P)$ is trivially satisfied.
However, if $\deg(f)\geq 2$, then the condition is quite restrictive.
A guiding example is given by $W_d(C)$ inside the Jacobian $JC$ of a smooth genus $g$ curve $C$, which has property $(\mathcal P)$ with respect to $W_{g-e}(C)$ for any $e\geq d$, see \cite[Example 2.2]{Debarre}.
The following result of Debarre yields a different interpretation of this notion.

\begin{lemma}[{\cite[Lemma  2.3]{Debarre}}] \label{lem:debarre:propP:W-W}
Let $V$ and $W$ be two subvarieties of an abelian variety $A$ and let $g:V\times V \rightarrow A$ be the difference map. 
Then $W$ has property $(\mathcal P)$ with respect to $V$ if and only if, for $w\in W$ general, the only subvariety of $g^{-1}(W-w)$ which dominates both factors of $V\times V$ is the diagonal.
\end{lemma}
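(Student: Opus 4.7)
The plan is to produce an explicit isomorphism between the two preimages $g^{-1}(W-w)$ and $f^{-1}(w+V)$ (where $f \colon W\times V \to A$ is the addition map) so that the "trivial" subvariety $\{w\}\times V$ corresponds to the diagonal $\Delta_V \subset V\times V$, and the two dominance conditions are interchanged.

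First I would fix $w \in W$ and consider the morphism
\[
\phi \colon V\times V \longrightarrow W \times V, \qquad (v_1,v_2) \longmapsto (v_1 - v_2 + w,\, v_2).
\]
If $(v_1,v_2) \in g^{-1}(W-w)$, i.e.\ $v_1-v_2 \in W-w$, then $v_1-v_2+w \in W$, so $\phi$ does land in $W\times V$; moreover $f(\phi(v_1,v_2)) = v_1 + w \in w+V$, so $\phi$ restricts to a morphism $g^{-1}(W-w) \to f^{-1}(w+V)$. The map $(w',v') \mapsto (w'-w+v',\,v')$ is a two-sided inverse, so $\phi$ is an isomorphism of schemes.

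Next I would trace the relevant subvarieties and maps through $\phi$. Since $\phi(v,v)=(w,v)$, the diagonal $\Delta_V$ corresponds precisely to $\{w\}\times V$. The second projection $V\times V \to V$ corresponds under $\phi$ to the second projection $W\times V \to V$, so "dominates the second factor of $V\times V$" translates to "dominates $V$ via the second projection of $W\times V$". Finally, since $f\circ \phi(v_1,v_2) = v_1+w$, the map $f$ on $f^{-1}(w+V)$ corresponds, after translation by $w$, to the first projection $V\times V \to V$; hence "dominates $w+V$ via $f$" becomes "dominates the first factor of $V\times V$".

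Assembling these observations, a subvariety $Z \subset f^{-1}(w+V)$ dominates both $V$ (via $\mathrm{pr}_2$) and $w+V$ (via $f$) if and only if $\phi^{-1}(Z) \subset g^{-1}(W-w)$ dominates both factors of $V \times V$, and $Z = \{w\} \times V$ if and only if $\phi^{-1}(Z) = \Delta_V$. The equivalence of the two conditions, for general $w \in W$, is then immediate from the definition of property~$(\mathcal P)$. The argument is purely a formal change of variables on a product of abelian varieties, so I do not foresee any genuine obstacle; the only care required is in verifying that $\phi$ is well-defined and bijective on the relevant subschemes.
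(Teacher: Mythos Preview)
The paper does not give its own proof of this lemma; it simply cites \cite[Lemma~2.3]{Debarre}. Your argument is correct and is essentially the standard one: the affine automorphism $(v_1,v_2)\mapsto (v_1-v_2+w,\,v_2)$ of $A\times A$ restricts to an isomorphism $g^{-1}(W-w)\xrightarrow{\sim} f^{-1}(w+V)$ carrying $\Delta_V$ to $\{w\}\times V$ and intertwining the two projections of $V\times V$ with $\mathrm{pr}_2$ and (up to translation) $f$, which is exactly what is needed.
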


The following theorem, which is a special case of the main result in \cite{Schreieder2}, is one of the key ingredients of this paper.

\begin{theorem}[{\cite[Corollary 21]{Schreieder2}}] \label{thm:propP}
Let $(A,\Theta)$ be an indecomposable ppav, and let $V,W\subseteq A$ be subvarieties of dimensions $d$ and $g-d-1$ respectively, whose cohomology classes are minimal, i.e.
$$[V]=\frac{\theta^{g-d}}{(g-d)!} \,\,\,\,\,\, {\rm  and} \,\,\,\,\,\, [W]=\frac{\theta^{d+1}}{(d+1)!} \, .$$
If $V+W=\Theta $, then  $V$ has property $(\mathcal P)$ with respect to $W$ and vice versa. 
\end{theorem}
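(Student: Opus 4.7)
The plan is to argue by contradiction, exploiting the rigidity that minimal cohomology classes impose on indecomposable ppavs. The organizing tool is the addition map $f\colon V\times W\to\Theta$. Because $V+W=\Theta$ has the expected dimension $g-1$, the map $f$ is surjective and generically finite, with degree
\[
\deg(f)=\binom{g-1}{d}
\]
computed from the Pontryagin product $[V]\star[W]=\binom{g-1}{d}\theta$. For general $v\in V$, the preimage $f^{-1}(v+W)$ is pure of dimension $g-d-1$ and contains $\{v\}\times W$ as a distinguished irreducible component; property $(\mathcal P)$ for $V$ with respect to $W$ is exactly the assertion that no other component dominates $W$ under the second projection.

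Assume, towards a contradiction, that there is an irreducible component $Z\subseteq f^{-1}(v+W)$, different from $\{v\}\times W$, which dominates both $W$ via $\pi_2$ and $v+W$ via $f$. For any point $(v',w')\in Z$ with $v'\ne v$, the relation $v'+w'\in v+W$ translates into $v-v'\in W-W$ together with $w'\in W\cap\bigl(W+(v-v')\bigr)$. Hence $\pi_1(Z)\subseteq V\cap\bigl(v+(W-W)\bigr)$, and the fibre of $\pi_1|_Z$ over each $v'$ embeds into the self-intersection $W\cap(W+c)$, where $c=v-v'$. Letting $v$ vary over $V$ and assembling the components $Z_v$ into a single irreducible family $\mathcal Z\subseteq V\times V\times W$, one obtains a positive-dimensional locus of parameters $c\in V-V\subseteq A$ along which the intersections $W\cap(W+c)$ are strictly larger than a transverse count would predict.

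The next step is to promote this into a contradiction with indecomposability. Using the Pareschi--Popa theta-duality for $GV$-subschemes of minimal class, one identifies $W$ up to translation with $-V(V)$, so that the self-intersection geometry of $W$ is rigidly controlled by that of $V$. A family of abnormally large self-intersections of $W$, parameterized by a positive-dimensional locus in $A$, is extremely restrictive: the expected outcome is that $W$ either must be preserved by a positive-dimensional subgroup $H\subseteq A$, or must be contained in a translate of a proper sub-ppav of $(A,\Theta)$; each alternative forces a product decomposition of $(A,\Theta)$, contradicting the indecomposability hypothesis. The ``vice versa'' assertion then follows at once by the symmetry $V\leftrightarrow W$, $d\leftrightarrow g-d-1$, which preserves all the hypotheses.

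The main obstacle lies in this last rigidity step, converting an abnormal family of self-intersections into an actual translation stabilizer or sub-ppav. This is the geometric core of \cite{Schreieder2}: the argument relies on the severe restrictions that the minimal cohomology class condition places on the global structure of $W$ in an indecomposable ppav, preventing any ``hidden'' translation symmetry from existing without forcing the polarization itself to split.
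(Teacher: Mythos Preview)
This theorem is not proved in the present paper; it is quoted as \cite[Corollary~21]{Schreieder2}, so there is no in-paper argument to compare your attempt against. Evaluating the proposal on its own terms, it is not a proof but an outline that openly defers the decisive step back to \cite{Schreieder2}. Your final paragraph concedes that ``the main obstacle lies in this last rigidity step'' and that ``this is the geometric core of \cite{Schreieder2}''; since the statement you are asked to prove \emph{is} Corollary~21 of that paper, appealing to it is circular.

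There are also concrete problems earlier in the sketch. The invocation of ``Pareschi--Popa theta-duality for $GV$-subschemes'' to identify $W$ with $-V(V)$ is out of place: the hypotheses of the theorem say nothing about $V$ or $W$ being $GV$-subschemes, only that they are subvarieties of minimal class with $V+W=\Theta$, so no such identification is available here. More seriously, the claimed dichotomy---that a positive-dimensional family of ``abnormally large'' self-intersections $W\cap(W+c)$ must force either a positive-dimensional translation stabilizer of $W$ or containment of $W$ in a proper sub-ppav---is asserted without any justification and is not a standard lemma one can cite. Excess intersections of this kind do not by themselves produce a stabilizer or a splitting; extracting a genuine geometric obstruction from them is exactly the nontrivial content you would need to supply. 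As it stands, the argument does not go beyond the observation that a failure of property~$(\mathcal P)$ is encoded in the fibres $W\cap(W+c)$, which is close to a reformulation of the definition (compare \lemmaref{lem:debarre:propP:W-W}) rather than a step toward a contradiction.
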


\section{Detecting Jacobians of hyperelliptic curves}
\subsection{}
The purpose of this section is to deal with the case where $V$ or $W$ in \theoremref{main} is singular.
We show that this corresponds to the case of hyperelliptic Jacobians.

\begin{theorem} \label{thm:V=sing}
Let $(A,\Theta)$ be an indecomposable ppav of dimension $5$. 
Suppose that there exist two surfaces $V,W\subseteq A$ of minimal cohomology class $\frac{\theta^3}{3!}$, 
such that $V+W=\Theta$.
If either $V$ or $W$ is singular, then there is a smooth hyperelliptic curve $C$ and an isomorphism $(A,\Theta) \cong (J(C),\Theta_C)$ which identifies $V$ with $W_2(C)$. 
\end{theorem}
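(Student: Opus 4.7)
The plan is to proceed in three steps: (i) establish that $\dim \Theta^{\sing} \ge 2$; (ii) invoke Beauville's classification to identify $(A,\Theta)$ as a hyperelliptic Jacobian; and (iii) apply Debarre's theorem on subvarieties of minimal class in Jacobians to identify $V$ with $W_2(C)$.

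For (i), the crucial step, assume without loss of generality that $V$ is singular (the argument is symmetric in $V$ and $W$, and step (iii) recovers the identification of $V$ from either hypothesis). Fix $v_0\in V^{\sing}$ and set $m := \operatorname{mult}_{v_0} V \ge 2$. The claim is that $v_0 + w \in \Theta^{\sing}$ for every $w$ in a dense open subset of $W$. By \lemmaref{lem:deg(f)} (applied with $\lambda = 1$), the addition map $f\colon V\times W\to V+W=\Theta$ is generically finite of degree $6$. Locally near $(v_0,w)$, for $w$ a smooth point of $W$, the image $f(V\times W)$ is a $4$-dimensional branch of $\Theta$ whose tangent cone at $v_0+w$ contains the Minkowski sum $C_{v_0}V + T_wW$. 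For $w$ outside a proper closed subset of $W$ — namely those $w$ for which $T_wW$ meets some irreducible component of the tangent cone $C_{v_0}V$ nontrivially — this Minkowski sum is a $4$-dimensional cone of degree at least $m$ in $T_0A$. Consequently $\operatorname{mult}_{v_0+w}\Theta \ge m \ge 2$, so $v_0+w\in \Theta^{\sing}$. Passing to the closure yields the surface $v_0+W\subseteq \Theta^{\sing}$, giving the desired bound $\dim \Theta^{\sing}\ge 2$.

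For step (ii), Beauville's classification \cite{Beauville1} states that every indecomposable ppav of dimension $5$ whose theta divisor has singular locus of dimension at least $2$ is isomorphic to $(JC,\Theta_C)$ for some smooth hyperelliptic curve $C$ of genus $5$. For step (iii), Debarre's theorem \cite{Debarre} asserts that any geometrically nondegenerate subvariety of minimal cohomology class in $JC$ is a translate of $\pm W_d(C)$. Applied to the surface $V\subseteq JC$ of class $\frac{\theta^3}{3!}$, this yields that $V$ is a translate of $W_2(C)$ or $-W_2(C)$; composing the isomorphism from step (ii) with a translation and, if needed, the inversion $(-1)$ (which is an automorphism of $(JC,\Theta_C)$ up to translation of $\Theta_C$), we obtain an identification sending $V$ to $W_2(C)$, completing the argument.

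The main obstacle is step (i), specifically the multiplicity estimate $\operatorname{mult}_{v_0+w}\Theta\ge m$. The subtle point is verifying that $T_wW$ meets each tangent $2$-plane of $C_{v_0}V$ trivially for generic $w\in W$: the locus in $W$ where this fails is cut out by a Schubert-type condition on the Gauss image of $W$, and its properness amounts precisely to the (geometric) nondegeneracy of $W$. The latter holds automatically since $W$ has minimal cohomology class (see \parref{scn:nondegeneracy}), so the transversality required for the tangent-cone computation is available on a dense open subset of $W$.
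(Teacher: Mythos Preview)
Your three-step outline matches the paper's exactly, and steps (ii) and (iii) are fine. The problem is step (i): the claim that the tangent cone of $\Theta$ at $v_0+w$ contains $C_{v_0}V+T_wW$ as a $4$-dimensional cone of degree $\ge m$ is not correct under the transversality condition you state. The Minkowski sum is a set-theoretic operation on the \emph{reduced} tangent cone, so its degree can be strictly smaller than $m=\operatorname{mult}_{v_0}V$. For a concrete failure in the relevant dimensions, take $V=\{y^2=x^3,\ z=0,\ v=0\}\subset\mathbb{C}^5_{x,y,z,u,v}$ (a surface with $\operatorname{mult}_0 V=2$ and reduced tangent cone the $(x,u)$-plane) and $T_wW$ the $(y,v)$-plane. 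Your condition ``$T_wW$ meets each tangent $2$-plane of $C_{v_0}V$ trivially'' is satisfied, yet $V+W=\{z=0\}$ is smooth at the origin. So the genericity condition you impose on $w$ is not the right one, and the multiplicity bound does not follow.

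The paper's argument for (i) is both simpler and avoids tangent cones altogether: one uses the Zariski tangent \emph{space}. Since $v_0$ is a singular point of the surface $V$, one has $\dim T_{v_0}V\ge 3$; and functoriality of Zariski tangent spaces under the factorisation $V\times W\to\Theta\hookrightarrow A$ gives $T_{v_0}V+T_wW\subseteq T_{v_0+w}\Theta$. Nondegeneracy of $W$ (which you correctly identify as the relevant input) then says that for general $w$ the Gauss image $[T_wW]\in G(2,5)$ avoids the Schubert hyperplane $\{\Lambda:\Lambda\cap T_{v_0}V\neq 0\}$, so $T_{v_0}V+T_wW=T_0A$ and hence $\dim T_{v_0+w}\Theta=5$, forcing $v_0+w\in\Theta^{\sing}$. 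Note that in the counterexample above this condition fails precisely because $T_0V\cap T_wW$ is the $y$-axis, so the paper's criterion correctly does not apply there.
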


\begin{proof}
By symmetry, it suffices to deal with the case where $V$ is singular.
We claim that
\begin{align} \label{eq:Vsing+V}
(V^{\sing}+W)^{\red}\subseteq \Theta^{\sing} ,
\end{align}
where $\Theta^{\sing} $ denotes the singular locus of $\Theta$ and $(V^{\sing}+W)^{\red}$ denotes the reduced scheme structure on the image $V^{\sing}+W$.
In order to prove the above inclusion, let $v_0\in V^{\sing}$ be a singular point of $V$.
Then the Zariski tangent space
\[
T_{v_0}V\subseteq T_{0}A
\]
is at least three-dimensional. 

Let us consider the Gauss map
$$
\Gamma:W^{{\rm reg}} \longrightarrow G (2, 5), \,\,\,\,\,\, w\mapsto [T_w W \subseteq T_0 A],
$$
where we see the tangent space $T_wW$ as being translated inside the tangent space of $A$ at the origin.
Since $W$ has minimal cohomology class, it is nondegenerate, and so the image of $\Gamma$ via the Pl\"ucker embedding $G(2,5)\hookrightarrow \PP^9$ is not contained in any hyperplane.
This implies that for general $w\in W$,
\[
T_{v_0}V+ T_{w}W=T_{0}A . 
\]
Hence, $v_0+w\in \Theta^{\sing}$ for general $w\in W$ and so $v_0+W\subseteq \Theta^{\sing}$. 
This proves (\ref{eq:Vsing+V}).

Since $V$ contains a singular point by assumption, (\ref{eq:Vsing+V}) implies $\dim(\Theta^{\sing})\geq 2$.   Now as $(A,\Theta)$ is five-dimensional, it is a generalized Prym variety.
Therefore, $(A,\Theta)$ indecomposable with $\dim(\Theta^{\sing})\ge 2$ implies by a result of Beauville \cite[Theorem 4.10, Theorem 5.4]{Beauville1}  that $(A,\Theta)$ is isomorphic to the Jacobian of a smooth hyperelliptic curve.
The identification of $V$ with $W_2 (C)$ follows then from Debarre's proof of the minimal class conjecture for Jacobians 
\cite[Theorem 5.1]{Debarre}.  
\end{proof}

\begin{remark}  More generally, the above arguments relate the locus of ppavs with decomposable theta divisor $\Theta=V+W$ to the Andreotti--Mayer loci $N_\ell$; i.e.,   
the loci  of ppavs $(A,\Theta)$ of dimension $g$ with $\dim ( \Theta^{\sing}) \ge \ell$.  
Indeed, if $(A,\Theta)$ is  a  ppav of dimension $g$, 
admitting  subvarieties $V,W\subseteq A$ with $V+W=\Theta$, 
then $V$ and $W$ are nondegenerate by \cite[Theorem 1]{Schreieder2}.
If $V$ is singular the arguments above therefore show 
$$
(A,\Theta)\in N_{g-1-\dim V}.
$$
In particular, if $V$ is a singular surface, then $(A,\Theta)\in N_{g-3}$, and if $V$ is a singular threefold, then $(A,\Theta)\in N_{g-4}$.  Andreotti and Mayer have shown that  the closure of the Jacobian locus $ \bar J_g\subseteq \mathcal A_g$ is an irreducible component of $N_{g-4}$ (although it is  never equal to $N_{g-4}$ for $g\ge 4$ \cite{Beauville1}).  Moreover, for indecomposable Jacobians and generalized Prym varieties,   $(A,\Theta)\in N_{g-3}$ implies that $(A,\Theta)$ is the Jacobian of a hyperelliptic curve \cite{Beauville1}.  
\end{remark}

\section{Detecting Jacobians of nonhyperelliptic curves}

\subsection{}
In this section we prove the following special case of \theoremref{main}, which isolates the case of nonhyperelliptic Jacobians.

\begin{theorem}\label{thm:V=W}
Let $V$ be a smooth surface in a five-dimensional indecomposable ppav $(A,\Theta)$, of minimal cohomology class
$[V]=\frac{\theta^3}{3!}$. 
If  $V+ V = \Theta$, then there is a smooth nonhyperelliptic curve $C$ and an isomorphism $(A,\Theta)\cong (JC,\Theta_C)$ which identifies $V$ with $W_2(C)$.
\end{theorem}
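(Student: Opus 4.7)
The plan is to use property $(\mathcal P)$ to exhibit $V$ as a Minkowski sum of two curves, and then to invoke the main theorem of \cite{Schreieder1} to identify $(A,\Theta)$ with a Jacobian.

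First I would apply \theoremref{thm:propP} (with $W=V$) to conclude that $V$ has property $(\mathcal P)$ with respect to itself, and then \lemmaref{lem:deg(f)} together with the hypothesis $V+V=\Theta$ (so that $\lambda=1$) to conclude that the addition map $f\colon V\times V\to \Theta$ is generically finite of degree $6$. The key step is to analyze, for a general $v\in V$, the preimage $f^{-1}(v+V)\subseteq V\times V$: this is a two-dimensional subscheme mapping onto $v+V$ with total degree $6$, and it contains the two obvious components $\{v\}\times V$ and $V\times\{v\}$, which contribute degree one each. Hence two-dimensional irreducible components accounting for the remaining degree four must exist. Let $Z$ be any such additional component. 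By property $(\mathcal P)$, applied via each of the two projections of $V\times V$ (which are symmetric), the image $D_i:=\pi_i(Z)$ is a proper subvariety of $V$ for $i=1,2$. Neither $\pi_i(Z)$ can be a single point, since otherwise $Z=V\times\{w\}$ or $\{w\}\times V$ would, together with $f(Z)=v+V$, force $w=v$, a contradiction. Thus both $D_1$ and $D_2$ are irreducible curves in $V$; and since $Z$ is two-dimensional irreducible and contained in the two-dimensional product $D_1\times D_2$, we must have $Z=D_1\times D_2$.

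The equality $f(Z)=v+V$ now reads as the Minkowski sum identity $D_1+D_2=v+V$, so that $V$ is a translate of the Minkowski sum of two curves. In particular, $V$ and hence $\Theta=V+V$ has a curve summand. The main result of \cite{Schreieder1}, applied to the indecomposable ppav $(A,\Theta)$, then produces an isomorphism $(A,\Theta)\cong (JC,\Theta_C)$ with $C$ a smooth curve of genus five. Debarre's minimal class theorem \cite[Theorem 5.1]{Debarre} identifies $V$, of minimal cohomology class inside this Jacobian, with a translate of $\pm W_2(C)$. Finally, $C$ must be nonhyperelliptic, since the surface $W_2(C)$ of a hyperelliptic curve is singular at the unique $g^1_2$, which would contradict the smoothness of $V$.

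The main obstacle I anticipate is the structural analysis of $f^{-1}(v+V)$: one must ensure that the residual degree four genuinely produces at least one additional irreducible two-dimensional component, rather than being absorbed as multiplicities along $\{v\}\times V$ or $V\times\{v\}$. A clean way to proceed would be to choose $v$ such that $f$ is \'etale over a general point of $v+V$; then the two trivial components appear with multiplicity one and the remaining degree four must come from honest extra components, to which the argument above applies. A secondary point that needs a small amount of care is that the projection images $\pi_i(Z)$ are genuinely irreducible curves; this is automatic from the irreducibility of $Z$, but the replacement of $D_i$ by an irreducible component should be made explicit.
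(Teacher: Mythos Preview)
Your proposal is correct and follows essentially the same route as the paper: use \theoremref{thm:propP} and \lemmaref{lem:deg(f)} to get degree $6$, strip off the two trivial sheets $\{v\}\times V$ and $V\times\{v\}$ over $v+V$, pick a residual irreducible surface component $R'$ dominating $v+V$, use property $(\mathcal P)$ symmetrically to force both projections of $R'$ to be curves so that $R'=\mathrm{pr}_1(R')\times\mathrm{pr}_2(R')$, conclude $V$ has a curve summand, and finish with \cite{Schreieder1} and \cite{Debarre}. One small simplification: your separate argument that $\pi_i(Z)$ cannot be a point via ``$w+V=v+V\Rightarrow w=v$'' implicitly uses triviality of the stabilizer of $V$; it is cleaner (and is what the paper does implicitly) to observe that if $\pi_1(Z)$ were a point then $Z=\{w\}\times V$ forces $\pi_2(Z)=V$, which already contradicts the property $(\mathcal P)$ conclusion you just established.
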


\begin{proof}
What follows uses ideas from \cite[Theorem 5]{Ran}.
Let $f:V\times V \rightarrow \Theta$ be the addition morphism.
Since $V$ has minimal cohomology class, $f$ is generically finite of degree $6$ by \lemmaref{lem:deg(f)}.

Let us pick a general point $v\in V$.
The reduced preimage of $v+V$ decomposes as
\[
f^{-1}(v+V)^{\red}=(v\times V)\cup (V\times v) \cup R \cup Q ,
\] 
where $f(Q)\subsetneq v+V$ while each component of $R$ dominates $v+V$ via $f$. 
The restriction of $f$ to $R$ has degree $4$, because for general $v_2\in V$, $f^{-1}(v+v_2)$ is given by six distinct points. 
In particular, we can pick a two dimensional component $R'\subseteq R$ which dominates $v+V$ via $f$. 

By \theoremref{thm:propP}, $V$ has property $(\mathcal P)$ with respect to itself.
Hence, ${\rm pr}_2(R')\subsetneq V$.
After swapping the factors of $V\times V$, the same argument shows ${\rm pr}_1(R')\subsetneq V$.
Since $R'$ is an irreducible surface in $V\times V$, these observations identify $R'$ with  the product ${\rm pr}_1(R')\times {\rm pr}_2(R')$ in $V\times V$:
\[
R'={\rm pr}_1(R')\times {\rm pr}_2(R') .
\] 
Applying $f$ shows that $V={\rm pr}_1(R')+{\rm pr}_2(R')$ has a curve summand, hence so does $\Theta$.
It therefore follows from \cite{Schreieder1} that there is a smooth curve $C$ and an isomorphism $(A,\Theta)\cong(JC,\Theta_C)$.

Since $V$ has minimal cohomology class, it again follows from \cite[Theorem 5.1]{Debarre} that we can choose this isomorphism in such a way that it identifies $V$ with $W_2(C)$.
Since $V$ is smooth, $C$ must be nonhyperelliptic, which concludes the proof. 
\end{proof}

\section{Detecting intermediate Jacobians of cubic threefolds}

\subsection{}
The aim of this section is to prove the following result, which is the next step towards the proof of \theoremref{main}.

\begin{theorem}\label{T:cubics}
Let $V\subset (A,\Theta)$ be a smooth surface in an indecomposable ppav of dimension $5$, of minimal   cohomology class $[V]=\frac{\theta^3}{3!}$. 
If  $V- V = \Theta$, then there is a smooth cubic threefold $Y$ and an isomorphism $(A,\Theta)\cong (JY,\Theta_Y)$ which identifies $V$ with the Fano surface of lines on $Y$.
\end{theorem}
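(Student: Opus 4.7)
The plan is to deduce the theorem from the characterization of intermediate Jacobians of cubic threefolds due to Casalaina-Martin \cite{Casalaina1, Casalaina2}: among indecomposable ppavs of dimension five, the intermediate Jacobians of smooth cubic threefolds are singled out by the existence of a point of multiplicity three on $\Theta$. The task is therefore to use the hypothesis $V-V=\Theta$ to show $\mathrm{mult}_0 \Theta = 3$.

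The natural tool is the difference morphism $g:V\times V\to V-V=\Theta$, $(v_1,v_2)\mapsto v_1-v_2$, which by \lemmaref{lem:deg(f)} is surjective and generically finite of degree $6$ and satisfies $g^{-1}(0)=\Delta_V$ set-theoretically. To extract $\mathrm{mult}_0 \Theta$, I blow up $A$ at $0$, with exceptional divisor $E=\PP(T_0A)\cong\PP^4$, and $V\times V$ along $\Delta_V$, with exceptional divisor $\PP(T_V)\to V$ via the identification $N_{\Delta_V/V\times V}\cong T_V$. The morphism $g$ lifts to a morphism $\tilde g$ between the blow-ups whose restriction to the exceptional divisors is the projectivized Gauss morphism $(v,[\xi])\mapsto [\xi]\in \PP(T_0A)$, whose image is the tangent scroll $\tau(V)=\bigcup_{v\in V}\PP(T_vV)\subset\PP^4$. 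This coincides as a set with the projectivized tangent cone $\PP C_0\Theta$, and hence
$$ \deg\bigl(\PP(T_V)\to \tau(V)\bigr) \cdot \mathrm{mult}_0 \Theta \;=\; \int_{\PP(T_V)} h^3 \;=\; K_V^2 - \chi_{\mathrm{top}}(V), $$
where $h$ is the pullback of the hyperplane class from $\PP^4$, and the final equality follows from the Chern-class relation $h^2=c_1(T_V)h-c_2(T_V)$ on the projectivization of the rank-two bundle $T_V$, via $\pi_*h^3=c_1(T_V)^2-c_2(T_V)$.

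The intersection-theoretic heart of the argument consists of two computations. First, the degree of the cover $\PP(T_V)\to\tau(V)$ equals $6$: a generic direction $[\eta]\in\PP C_0\Theta$ arises as the limit of $p/|p|$ for some $p\to 0$ along $\Theta^{\mathrm{reg}}$; each such $p$ has six preimages in $V\times V$ specializing to (possibly coincident) pairs on $\Delta_V$; and property $(\mathcal P)$ of $V$ with respect to $-V$ from \theoremref{thm:propP}, which applies because $V+(-V)=\Theta$ and both $[V]$ and $[-V]$ are minimal, ensures that the six corresponding points of $\PP(T_V)$ are generically distinct. Second, a Schubert-calculus computation on $G(2,5)$ using the Gauss map $\gamma_V:V\to G(2,5)$, together with the minimality of $[V]=\theta^3/3!$ and the constraint $V-V=\Theta$, will give $K_V^2-\chi_{\mathrm{top}}(V)=18$. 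Combining these yields $\mathrm{mult}_0 \Theta=3$, whereupon \cite{Casalaina1,Casalaina2} supply an isomorphism $(A,\Theta)\cong (JY,\Theta_Y)$ for a smooth cubic threefold $Y$.

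It remains to identify $V$ with the Fano surface $F\subset JY$. Under this isomorphism $V$ is a smooth surface of minimal class in $JY$ satisfying $V-V=\Theta_Y$; by Clemens--Griffiths and subsequent work on the intermediate Jacobian, $F$ is, up to translation and sign, the unique such surface in $JY$, and one can adjust the isomorphism by a translation to identify $V$ with $F$. The main obstacle I anticipate is the Chern-number computation $K_V^2-\chi_{\mathrm{top}}(V)=18$: while $K_V^2$ and $\chi_{\mathrm{top}}(V)$ are not determined by the cohomology class $[V]$ alone, the combination of minimality, the degree-$6$ difference map, and property $(\mathcal P)$ should rigidify $V$ sufficiently, and carrying this out cleanly via Schubert calculus on $\gamma_V$ is where the real work lies.
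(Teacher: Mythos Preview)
Your setup is correct and coincides with the paper's: the difference map $g$ has degree $6$, the scheme-theoretic fiber $g^{-1}(0)$ is the diagonal, and Fulton's Segre-class pushforward gives
\[
6\cdot \mathrm{mult}_0\Theta \;=\; c_1(V)^2 - c_2(V).
\]
(Your detour through property $(\mathcal P)$ to justify the degree $6$ on the exceptional divisors is unnecessary and not obviously valid: property $(\mathcal P)$ concerns preimages of translates $v+W$, not the infinitesimal behavior at $0$. The correct reason is simply that $g^{-1}(0)=\Delta_V$ is smooth, so the blow-up formula applies directly.)

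The genuine gap is exactly where you flag it: you assert that Schubert calculus will give $c_1(V)^2-c_2(V)=18$, but you do not carry this out, and in fact there is no such direct computation. The Gauss map $\gamma_V:V\to G(2,5)$ gives $\Gamma_{V\ast}[V]=(c_1^2-c_2)\sigma_2^\vee + c_2\,\sigma_{1,1}^\vee$, but nothing in the hypotheses pins down the two coefficients separately. The paper does \emph{not} compute $\mathrm{mult}_0\Theta=3$; it only shows $\mathrm{mult}_0\Theta\geq 3$, which is all that \cite{Casalaina1,Casalaina2} requires, by ruling out $\mathrm{mult}_0\Theta\in\{1,2\}$ via the geometry of the tangent cone. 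The Gauss map factors through the Fano variety of lines on $\PP C_0\Theta$; if $\mathrm{mult}_0\Theta=1$ this forces $\gamma_V(V)\subset\Sigma_1$, contradicting nondegeneracy. If $\mathrm{mult}_0\Theta=2$ and the quadric is singular, the same contradiction arises. If the quadric is smooth, its Fano variety is $\PP^3$ and a Schubert computation forces $c_1^2-c_2=c_2=12$, whence Noether gives $\chi(\mathcal O_V)=3$. The key extra input, which your proposal lacks entirely, is the a priori bound $\chi(\mathcal O_V)\geq 4$, obtained from Beauville's inequality $p_g\geq 2q-4$ for surfaces of general type together with $p_g(V)\geq\binom{5}{2}$ from nondegeneracy. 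This is what kills the smooth-quadric case.

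Two further points. First, \cite{Casalaina1,Casalaina2} leaves open the possibility that $(A,\Theta)$ is a hyperelliptic Jacobian; you must rule this out, which is immediate since then $V$ would be a translate of $\pm W_2(C)$ (by Debarre) and hence singular. Second, your identification of $V$ with $F$ by appealing to a uniqueness statement is not available in the literature in the generality you need (H\"oring's result is only for the generic cubic). The paper instead observes that once $\PP C_0\Theta\cong Y$, the Gauss map factors as $V\to F\hookrightarrow G(2,5)$; since both $V$ and $F$ satisfy the hypotheses of the theorem, both have $c_1^2-c_2=18$, so comparing $\sigma_2\cdot\Gamma_{V\ast}[V]$ with $\sigma_2\cdot i_\ast[F]$ shows the map $V\to F$ has degree $1$. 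An Albanese argument then upgrades this to an identification inside $A$.
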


The rest of this section is devoted to the proof of \theoremref{T:cubics}; the notation will always be that in the statement of the theorem.  

\subsection{The difference map}
We start by considering the natural difference map
$$g: V \times V \longrightarrow V - V=\Theta \subset A.$$
Since $V$ has minimal class, $\deg(g)=6$ by \lemmaref{lem:deg(f)}. 

\begin{lemma}\label{scheme}
The fiber $g^{-1}(0)$ is scheme-theoretically isomorphic to the diagonal $\Delta \subseteq V\times V$, which is isomorphic to $V$.
\end{lemma}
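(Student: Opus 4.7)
The plan is to show that $g^{-1}(0)$ is smooth of dimension $2$ along the diagonal $\Delta$, which forces the (a priori possibly nonreduced) scheme-theoretic fiber to coincide with $\Delta$. I would proceed in three short steps: identify the reduced fiber as $\Delta$, compute the Zariski tangent spaces of $g^{-1}(0)$ along $\Delta$, and then compare dimensions to deduce smoothness and hence equality of schemes.

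First, set-theoretically a pair $(v_1,v_2)\in V\times V$ lies in $g^{-1}(0)$ if and only if $v_1-v_2=0$ in $A$, which, since $V\hookrightarrow A$ is a closed subvariety, is equivalent to $v_1=v_2$ in $V$. Hence $g^{-1}(0)_{\mathrm{red}}=\Delta\cong V$. Second, I would compute $dg$ at a point $(v,v)\in\Delta$ by writing $g$ as the restriction of the difference map $A\times A\to A$ and translating tangent spaces of $A$ back to $T_0A$; this gives
$$dg_{(v,v)}\colon T_vV\oplus T_vV\longrightarrow T_0A,\qquad (u_1,u_2)\longmapsto u_1-u_2,$$
whose kernel is $\{(u,u):u\in T_vV\}=T_{(v,v)}\Delta$, of dimension $\dim V=2$.

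Third, since $\Delta\subseteq g^{-1}(0)$ as closed subschemes, the local dimension $\dim_p g^{-1}(0)$ is at least $2$ at every $p\in\Delta$, while the Zariski tangent space at such a $p$ is exactly $\ker dg_p$, which is $2$-dimensional. Therefore $g^{-1}(0)$ is smooth of dimension $2$ at each point of $\Delta$, hence reduced there, and so the closed immersion $\Delta\hookrightarrow g^{-1}(0)$ of reduced schemes with the same underlying topological space must be an isomorphism. There is really no essential obstacle here: the tangent-space calculation is transparent because $g$ is the restriction of subtraction on the abelian variety, whose differential has a constant, explicit form. The only reason to record the lemma carefully is that the precise scheme structure on $g^{-1}(0)$ will be used in the multiplicity and intersection-theoretic computations that follow in this section.
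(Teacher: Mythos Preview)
Your proof is correct and follows essentially the same approach as the paper: both identify the reduced fiber as $\Delta$ set-theoretically, then compute the differential $dg_{(v,v)}\colon(u_1,u_2)\mapsto u_1-u_2$ to see that its kernel is two-dimensional, concluding that the scheme-theoretic fiber is smooth and hence equal to $\Delta$. You are simply more explicit than the paper about the final step (comparing local dimension with tangent-space dimension to force reducedness), but the argument is the same.
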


\begin{proof}
This is clear set-theoretically. On the other hand, if we fix a point $x \in V$, the differential of the difference map at 
$(x, x)$ is  given by vector difference, i.e. 
$$ dg_{(x,x)}: T_{(x,x)} V \times V \longrightarrow T_0 A, \,\,\,\,\,\, (v, w) \mapsto v -w$$
where we  consider $T_x V$ inside $T_0 A$ by translation. It follows that ${\rm Ker} ~dg_{(x,x)}$ is two-dimensional, and so the scheme-theoretic fiber is smooth.
\end{proof}

We can consider then the normal bundle $N$ to $g^{-1}(0)$ in $V\times V$, and we have
$$N  = N_{\Delta/V\times V} \simeq TV.$$
For Segre classes, this means that the coefficient of $[V]$ in $s(\Delta, V\times V)$ is 
$$s_2 (TV) = c_1(V)^2 - c_2 (V).$$
Recall also (see \cite[\S4.3]{Fulton}) that $s(0, \Theta) = {\rm mult}_0 \Theta$. From the general push-forward formula \cite[Proposition 4.2(a)]{Fulton}, combined with \lemmaref{scheme}, we immediately obtain:

\begin{lemma}\label{formula}
The multiplicity of $\Theta=V -V$ at the origin satisfies
$$\deg (g) \cdot {\rm mult}_0 \Theta = c_1(V)^2 - c_2 (V).$$
\end{lemma}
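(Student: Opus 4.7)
The plan is to apply Fulton's push-forward formula for Segre classes \cite[Proposition~4.2(a)]{Fulton} to the generically finite, proper, surjective morphism $g\colon V\times V\to\Theta$, invoking the scheme-theoretic identification $g^{-1}(0)=\Delta$ from \lemmaref{scheme} and the identification $N_{\Delta/V\times V}\cong TV$ of its normal bundle.

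First I would compute the left-hand side. Since $\Delta\hookrightarrow V\times V$ is a regular embedding of codimension two, one has
\[
s(\Delta,V\times V) = c(TV)^{-1}\cap [\Delta]
\]
under $\Delta\cong V$; its component in $A_0(\Delta)$ is $(c_1(TV)^2-c_2(TV))\cap[V]$, whose degree equals $c_1(V)^2-c_2(V)$.

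Next I would identify the right-hand side. Because $\Theta$ is pure of dimension four and $\{0\}$ is a point, $s(0,\Theta)=\mathrm{mult}_0\Theta\cdot [0]$ by the definition of multiplicity via Segre classes. The push-forward formula applied to $g$ and to the closed subscheme $\{0\}\subset\Theta$ then gives
\[
g_{\ast} s(\Delta,V\times V) = \deg(g)\cdot s(0,\Theta) = \deg(g)\cdot\mathrm{mult}_0\Theta\cdot [0]
\]
in $A_\ast(\{0\})$, and reading off the degree-zero component on both sides yields the desired identity.

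The main thing to verify is that the hypotheses of \cite[Proposition~4.2(a)]{Fulton} really apply in this setting: $V\times V$ is a smooth irreducible fourfold; $\Theta\subset A$ is an irreducible subvariety (since $(A,\Theta)$ is indecomposable); $g$ is proper and surjective of degree $6$ by \lemmaref{lem:deg(f)}; and, crucially, $g^{-1}(0)$ coincides with $\Delta$ \emph{scheme-theoretically}, which is exactly \lemmaref{scheme}. Once these are in place, the remainder is a routine Segre/Chern-class computation.
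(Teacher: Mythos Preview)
Your proposal is correct and follows essentially the same approach as the paper: apply \cite[Proposition~4.2(a)]{Fulton} to the proper surjective morphism $g$, using the scheme-theoretic identification $g^{-1}(0)=\Delta$ from \lemmaref{scheme} and the identity $s(0,\Theta)=\mathrm{mult}_0\Theta$ from \cite[\S4.3]{Fulton}. The paper states this in one sentence, while you have (correctly) spelled out the Segre-class computation $s_2(TV)=c_1(V)^2-c_2(V)$ and verified the hypotheses of Fulton's proposition; there is nothing to add.
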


\subsection{The Euler characteristic of $V$}
General results imply the following lower bound:

\begin{proposition}\label{P:ChiEst}
The holomorphic Euler characteristic of $V$ satisfies
$$
\chi(\mathscr O_V)\ge 4.
$$
\end{proposition}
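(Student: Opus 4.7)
The plan is to combine the nondegeneracy of $V$, a consequence of its minimal cohomology class, with a Castelnuovo--de Franchis type inequality for surfaces of maximal Albanese dimension. The starting point is that since $[V] = \theta^3/3!$ is a multiple of the minimal class, $V$ is nondegenerate in the sense of \parref{scn:nondegeneracy}; by Ran's criterion the restriction maps
\[
H^0(A, \Omega_A^k) \longrightarrow H^0(V, \Omega_V^k), \qquad k = 1, 2,
\]
are therefore injective. This yields the Hodge-number bounds
\[
q(V) := h^1(V, \mathcal{O}_V) \geq g = 5 \quad \text{and} \quad p_g(V) := h^0(V, \Omega_V^2) \geq \binom{g}{2} = 10.
\]

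Since $V \hookrightarrow A$, the surface $V$ automatically has maximal Albanese dimension. The strengthened form of the Castelnuovo--de Franchis inequality for such surfaces (due to Beauville and Catanese--Ciliberto) then gives a dichotomy: either $V$ admits an irrational pencil onto a smooth curve of genus $\geq 2$, or $p_g(V) \geq 2\,q(V) - 4$. In the second alternative, combining with the bounds above gives $p_g \geq \max(10,\, 2q - 4)$, whence
\[
\chi(\mathcal{O}_V) = 1 - q + p_g \geq 1 + \max(10 - q,\, q - 3),
\]
which is $\geq 4$ for every integer $q \geq 5$ (the minimum value $4$ is attained at $q = 7$, where $p_g = 10$).

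It remains to handle the case of an irrational pencil $f \colon V \to C$ with $g(C) \geq 2$. Here the plan is to exploit that $f$ produces, via the Albanese of $C$, an abelian subvariety related to $JC$ inside $\Alb(V)$; combining this with the indecomposability of $(A, \Theta)$ and the key hypothesis $V - V = \Theta$ should either rule out the existence of such a pencil (e.g.\ by contradicting the minimality of $[V]$, since a product-like fiber structure tends to decompose the theta class) or otherwise permit a direct computation of $\chi(\mathcal{O}_V)$ via a Leray-type decomposition of the Hodge structure of $V$. This fibered case is the main obstacle of the argument: the previous two steps follow essentially automatically from the restriction inequalities and classical Castelnuovo--de Franchis, whereas here one must genuinely exploit the equation $V - V = \Theta$ rather than just the embedding $V \subset A$.
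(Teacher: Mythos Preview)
Your argument has a genuine gap: the irrational-pencil case is not proved, only sketched with phrases like ``should either rule out \ldots\ or otherwise permit a direct computation.'' As you yourself note, this is ``the main obstacle,'' and nothing you have written actually overcomes it. The appeal to $V-V=\Theta$ and indecomposability of $(A,\Theta)$ to exclude a fibration $V\to C$ with $g(C)\ge 2$ is not obviously valid, and you have not supplied a mechanism.

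The good news is that this dichotomy is entirely unnecessary. You are invoking a Castelnuovo--de~Franchis style statement that only gives $p_g\ge 2q-4$ \emph{in the absence of} an irrational pencil, but Beauville's appendix to Debarre's paper on numerical inequalities (cited in the paper as \cite{Beauville3}) proves
\[
p_g(V)\ \ge\ 2q(V)-4
\]
\emph{unconditionally} for every smooth surface of general type. This is exactly what the paper uses: since $V$ is smooth and nondegenerate in $A$, it is canonically polarized (see \cite[p.~98--101]{Debarre2}), hence of general type, and Beauville's inequality applies with no case analysis. From there your computation and the paper's coincide: $p_g\ge 10$ from nondegeneracy together with $p_g\ge 2q-4$ gives $\chi(\mathcal{O}_V)=1-q+p_g\ge p_g/2-1\ge 4$. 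So the fix is simply to replace the dichotomy by the unconditional Beauville inequality; the fibered case then never arises.

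A minor remark: your justification of $q(V)\ge 5$ via ``Ran's criterion'' for $k=1$ is slightly off, since nondegeneracy in Ran's sense concerns the top-degree restriction map ($k=d=2$ here). The bound $q(V)\ge 5$ is still correct, but it follows from the simpler fact that $V$ generates $A$; in any case, once you use Beauville's inequality directly, only $p_g\ge 10$ is needed.
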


\begin{proof}
Since $V$ is smooth and nondegenerate in $A$, it is canonically polarized; see e.g.~\cite[p.98--101]{Debarre2}.  Being in particular of general type,  it then follows from \cite{Beauville3} that
$
p_g(V)\ge 2q(V)-4
$.
In other words $q(V) \le \frac{1}{2}p_g(V) +2$.  We then have that
$$
\chi(\mathscr O_V) =1 -q (V)+p_g (V) \ge  1-\left(\frac{1}{2}p_g (V) +2\right) +p_g (V)  =\frac{p_g (V) }{2}-1.
$$
Since $V$ is nondegenerate, we have $p_g (V)\ge \binom{5}{2} = 10$ (in fact, equality holds by \cite[Corollary 15]{Schreieder2}), from which it follows that  $\chi(\mathscr O_V)\ge 4$.  
\end{proof}

\subsection{The Gauss map}\label{S:G-Map}
Recall that the inclusion of $V$ in $A$ provides a Gauss map
$$\Gamma: V \longrightarrow \GG (1, 4), \,\,\,\,\,\,s\mapsto [\PP T_s V \subseteq \PP T_0 A],$$
where we see all the tangent spaces to points of $V$ as being translated inside the tangent space of $A$ at the origin.   Since $V$ is smooth and nondegenerate in $A$, the Gauss map is finite; see e.g.~\cite[Proposition~8.12, Corollary~15, p.101]{Debarre2}.

\begin{remark}\label{surface_class}
In the cohomology ring of $\GG (1, 4)$ we have the identification
\begin{equation}\label{E:Gam*}
\Gamma_* [V] = \big(c_1(V)^2 - c_2 (V) \big) \cdot \sigma_2^\vee + c_2(V) \cdot \sigma_{1,1}^\vee,
\end{equation}
where $(-)^\vee$ denotes the Poincar\'e dual; concretely, $\sigma_2^\vee=\sigma_{31}$, $\sigma_{1,1}^\vee=\sigma_{22}$.
Indeed, by definition, if $T$ is the tautological bundle on $\GG(1, 4)$, then 
$TV \simeq N \simeq \Gamma^*T$.  
It follows that the Segre class of $TV$ satisfies 
$$s(TV) = \Gamma^* (1 + \sigma_1 + \sigma_2 + \cdots) \in H^{*} ( V, \ZZ).$$
In particular we have that $s_2 (TV) = \Gamma^* \sigma_2$, and so by the projection formula we get 
$$\Gamma_* [V] \cdot \sigma_2 =  s_2 (TV) = c_1(V)^2 - c_2 (V).$$
This gives the first coefficient. The second can be obtained similarly, noting that $c_2 (V) = \Gamma^* \sigma_{1,1}$.
\end{remark}

Considering now the difference map $g:V\times V\to V-V\subseteq A$, 
blowing up $0$ in $\Theta = V-V$ and $\Delta$ in $V\times V$, we obtain a surjective morphism on exceptional divisors
$$
dg:\mathbf PN \longrightarrow \mathbf PC_0\Theta \subseteq \mathbf PT_0A,
$$
which is generically finite, and induced by the differential of $g$.   Here $C_0\Theta$ is the tangent cone to $\Theta$ at the origin.  Considering $dg$ on fibers  $\mathbf P(N_v)$ of the projectivized normal bundle, there is an induced map $V\to \mathbf G(1,4)$, which agrees with the Gauss map.  More precisely, 
under the identification of the bundles
$N  = N_{\Delta/V\times V} \simeq T_V$,
the assignments
$$
v\mapsto \mathbf Pdg_v(N_v)\subseteq \mathbf PT_0A
$$
$$
v\mapsto \mathbf PT_v V\subseteq \mathbf PT_vA=\mathbf PT_0A
$$
agree.  
It follows that if we denote by $F$ be the Fano variety of lines in $\mathbf PC_0\Theta \subseteq \mathbf P^{4}$, then the Gauss map factors as 
\begin{equation}\label{E:GaussFact}
\Gamma:V\longrightarrow F\hookrightarrow \mathbf G(1,4).
\end{equation}
Moreover, the lines parameterized by $V$ cover ${\bf P}C_0\Theta$.

\subsection{The case of quadric tangent cones}
In this section, in order to understand  what would happen if 
$$Q = \PP C_0 \Theta \subseteq \PP^4$$ 
were  a quadric hypersurface, we   recall some elementary facts about Fano varieties of lines on quadric threefolds.

\begin{proposition}\label{P:FanoQuad}
Let $Q\subseteq \mathbf P^4$ be a quadric threefold.  
\begin{enumerate}
\item If $Q$ is singular, then every irreducible component of the Fano variety $F$ of lines on $Q$ is contained in some Schubert 
variety $\Sigma_1 \subset \GG(1,4)$. In other words, there exists a plane $\PP^2 \subset \PP^4$ meeting each line parametrized 
by the respective component.

\item If $Q$ is smooth, then the Fano variety $F$ of lines on $Q$ satisfies $F \simeq \PP^3$. Moreover, if we 
denote by $\gamma : F \hookrightarrow \GG(1,4)$ the inclusion map, and by $H$ a hyperplane in $\PP^3$, via this isomorphism 
we have
$$\gamma_*[H]=2\sigma_{31}+2\sigma_{22}\in H^4 \big(\GG(1,4),\mathbf Z \big).$$
\end{enumerate}
\end{proposition}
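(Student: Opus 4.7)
The plan is to treat the two parts separately: for (1), a case analysis on the rank $r$ of the quadratic form defining the singular quadric $Q$; for (2), an intersection-theoretic computation on $\GG(1,4)$.

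For (1), the Schubert variety $\Sigma_1 \subset \GG(1,4)$ in question is the locus of lines meeting a fixed 2-plane $\Lambda\subset\PP^4$, so the task is to exhibit, for each irreducible component of $F$, such a $\Lambda$. Since $Q$ is singular, $r\in\{1,2,3,4\}$. When $r=4$, $Q$ is the cone with vertex a single point $p$ over a smooth quadric surface $Q_2\subset \PP^3$; projecting from $p$ shows that every line on $Q$ either passes through $p$ or lies in a 2-plane $\Pi = \langle p,\bar\ell\rangle$ for some line $\bar\ell$ on $Q_2$, and the two rulings of $Q_2$ yield two $3$-dimensional irreducible components of $F$. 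For the component coming from one ruling, I take $\Lambda=\langle p,\ell_0\rangle$ with $\ell_0$ a line in the \emph{other} ruling of $Q_2$: since $\bar\ell$ and $\ell_0$ belong to opposite rulings of $Q_2$, they meet in a point $q$, so $\Lambda$ intersects every such $\Pi$ in the line $\overline{pq}$, which meets every line contained in $\Pi$. When $r=3$, $Q$ is a cone with vertex a line $L$ over a smooth conic; every line on $Q$ lies in a 2-plane $\langle L,c\rangle$ and hence meets $L$, so any 2-plane $\Lambda\supset L$ suffices. When $r=2$, $Q=H_1\cup H_2$ and $F = \GG(1,H_1)\sqcup \GG(1,H_2)$; the 2-plane $\Lambda = H_1\cap H_2\subset \PP^4$ meets every line in either $H_i\simeq\PP^3$ by the standard incidence of lines and planes in $\PP^3$. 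When $r=1$, $Q=2H$, $F=\GG(1,H)$, and any 2-plane $\Lambda\subset H$ will do.

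For (2), the identification $F\simeq\PP^3$ for smooth $Q_3$ is classical; it is the spinor variety for $\mathrm{Spin}(5)\simeq\mathrm{Sp}(4)$, equivalently the Lagrangian Grassmannian $\mathrm{LG}(2,4)$. The main intermediate computation is the class $[F]\in H^6(\GG(1,4))$. Writing $[F]=m\sigma_3+n\sigma_{2,1}$ in the Schubert basis and pairing with Poincar\'e duals, $m=[F]\cdot\sigma_3$ counts lines on $Q$ through a generic point of $\PP^4$, which is $0$ since such a point lies off $Q$; and $n=[F]\cdot\sigma_{2,1}$ counts lines in a general hyperplane section $Q\cap H_0$ (a smooth quadric surface bearing two rulings of lines) meeting a general line $L_0\subset H_0$: since $L_0$ meets $Q\cap H_0$ in two points, each lying on one line from each ruling, this count equals $4$. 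Hence $[F]=4\sigma_{2,1}$. Next, $\gamma^*\sigma_1=2h$ on $F\simeq\PP^3$, with $h$ the hyperplane class: the Pl\"ucker degree $[F]\cdot\sigma_1^3 = 4\sigma_{2,1}\cdot(\sigma_3+2\sigma_{2,1})=8$ equals $\int_F(\gamma^*\sigma_1)^3$, forcing the factor $2$. By the projection formula,
\[
\gamma_*[H]=\gamma_*(h)=\tfrac{1}{2}\gamma_*(\gamma^*\sigma_1)=\tfrac{1}{2}\sigma_1\cdot[F]=2\sigma_1\cdot\sigma_{2,1}=2\sigma_{3,1}+2\sigma_{2,2},
\]
where the last step uses the Pieri formula $\sigma_1\cdot\sigma_{2,1}=\sigma_{3,1}+\sigma_{2,2}$.

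I expect the main obstacle to be the rank-$4$ subcase of part (1): one must both recognize that $F$ splits into two components indexed by the rulings on the base quadric surface $Q_2$, and realize that the Schubert witness for one component requires a 2-plane from the \emph{opposite} ruling (a 2-plane from the \emph{same} ruling would share only the vertex $p$ with a generic $\Pi$ in that ruling, catching only lines through $p$). The remaining subcases of (1) are essentially elementary, and the computations in (2) reduce to standard Schubert calculus on $\GG(1,4)$ once $[F]=4\sigma_{2,1}$ and $\gamma^*\sigma_1=2h$ are in hand.
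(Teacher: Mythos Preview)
Your argument for part~(1) is correct and essentially identical to the paper's: the same case split on the rank, the same opposite-ruling trick in the rank-$4$ case, and the vertex-line argument in rank~$3$ (the paper phrases rank~$3$ as a cone with point vertex over a singular quadric surface, but this is your cone with line vertex over a conic). One cosmetic slip: in rank~$2$ the two components $\GG(1,H_1)$ and $\GG(1,H_2)$ are not disjoint, they meet along $\GG(1,H_1\cap H_2)$; this does not affect the argument.

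Your part~(2) is correct but proceeds differently from the paper. The paper works out the symplectic/Lagrangian description of $F\simeq\PP^3$ explicitly, identifies a hyperplane $H_{[v]}\subset F$ with the locus of lines on $Q$ meeting a fixed line $\ell_{[v]}\subset Q$, and then computes the two coefficients $\gamma_*[H]\cdot\sigma_2$ and $\gamma_*[H]\cdot\sigma_{1,1}$ by direct enumerative geometry (intersecting $\ell_{[v]}$ with a general hyperplane section or a general $3$-plane section of $Q$). You instead compute the ambient class $[F]=4\sigma_{2,1}$ first, deduce $\gamma^*\sigma_1=2h$ from the Pl\"ucker degree, and finish with the projection formula and Pieri. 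Your route is cleaner computationally and avoids the need to interpret hyperplanes in $F$ geometrically; the paper's route, on the other hand, gives that geometric interpretation for free, which is sometimes useful in applications. Both are short and complete.
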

\begin{proof}  
Everything besides the last calculation is folklore; we sketch the proofs for completeness, and direct the reader to  
\cite[Ch.~22]{harris} for some of the standard results on Fano varieties of linear spaces on quadrics.  

\noindent
(1) If $Q$ is a rank four quadric, then it is a cone over a smooth quadric surface.  The Fano variety of lines then consists of two irreducible components $F'$ and $F''$, one for each family of lines on the smooth quadric.  These components meet along the family of lines through the vertex.  
It is clear from this description that any line on $Q$ corresponding to a point in $F'$ meets the plane spanned by the vertex of $Q$ and a line on the smooth quadric in the ``opposite'' ruling, i.e. the ruling defining $F''$. 

If $Q$ is a rank three quadric, then it is a cone over a singular irreducible quadric surface.   Every line on $Q$ meets the line through the vertex and the singular point of the quadric surface, and in particular any plane containing it.  

If $Q$ is the union of hyperplanes, then the conclusion is obvious.

\noindent
(2) Let $Q$ be a smooth quadric threefold with Fano variety of lines $F$.  We start by recalling a standard construction of the isomorphism
$F\cong {\bf P}^3$ (see e.g.~\cite[Exercise~22.6, p.290]{harris}).  Let $V$ be a four dimensional $\CC$-vector space, and let $\Omega:V\times V \rightarrow \mathbb \CC$ be a non-degenerate skew symmetric bilinear form.  Then the set 
$$Q'=\{\Lambda\in G(2,V):\Omega(\Lambda,\Lambda)=0\}$$ is a smooth hyperplane section of the Pl\"ucker embedding of $G(2,V)$ in 
${\bf P}^5$.  As $Q'$  is a smooth quadric hypersurface, we can take $Q'=Q$.
The  identification of ${\bf P}V$ with $F$ is made as follows:  for $[v]\in {\bf P}V$, the set
$$
\ell_{[v]}=\{\Lambda\in G(2,V):  \Lambda \supseteq \langle v\rangle ,\ \Omega(\Lambda,\Lambda)=0\}\subseteq Q
$$
is a line in $Q$.  Conversely, given any line $\ell$ on $Q$, there is a unique $[v]\in {\bf P}V$ so that the ray $\langle v\rangle$ is contained in every $2$-plane corresponding to a point of $\ell$.  

Now using $\Omega$, there is an identification ${\bf P}V ={\bf P}V^\vee$.  
Therefore, for $[v]\in {\bf P}V$, we have the associated hyperplane
$$
H_{[v]}:={\bf P}\{v'\in V:\Omega(v',v)=0\} \subseteq {\bf P}V.
$$
Note that there is a bijection
$$
H_{[v]}\longrightarrow \{\ell'\subseteq Q: \ell'\cap \ell_{[v]}\ne \emptyset\}\subset F, \,\,\,\,\,\, [v']\mapsto \ell_{[v']}.
$$

Moving on to the computation of the class $\gamma_*[H]$ for a hyperplane $H\subseteq \mathbf PV$, for dimension reasons we have
$$
\gamma_*[H]=a\sigma_{31}+b\sigma_{22},
$$
where $a=\gamma_*[H]\cdot \sigma_2$ and  $b=\gamma_*[H]\cdot \sigma_{11}$.
We first claim that 
$$\gamma_*[H] \cdot \sigma_{11}=2.$$  
Indeed, assume $H=H_{[v]}$ is the set of lines on the quadric  meeting the line $\ell_{[v]}$.  Recall that $\sigma_{11}$ corresponds to lines in ${\bf P}^4$ contained in a given hyperplane.  A general such hyperplane cuts $Q$ in a smooth quadric surface, and cuts $\ell_{[v]}$ at a single point, which lies on $Q$.  Therefore, we are asking for the set of lines on a smooth quadric, through a given point, which proves the claim. Next we claim that 
$$\gamma_*[H]\cdot \sigma_2=2.$$
Assume again that $H=H_{[v]}$ is the set of lines on the quadric  meeting $\ell_{[v]}$.   
 Recall that $\sigma_{2}$ corresponds to lines in ${\bf P}^4$ meeting a given line $\ell$.  
Take $\ell$ general, and consider the $3$-plane $\langle \ell,\ell_{[v]}\rangle$.  
This cuts $Q$ in  a smooth quadric surface, that meets $\ell$ in two points.  
 Therefore, we are asking for the set of lines on a smooth quadric, that meet a given line and pass through one of two general points.  There are exactly two, one for each point.   This proves the claim.
\end{proof}

\subsection{Proof of \theoremref{T:cubics}}
We can now put together all the ingredients discussed in the previous subsections.

\subsubsection{$(A,\Theta)$ is isomorphic to the intermediate Jacobian of a smooth cubic threefold}

Our aim will be to show that
\begin{equation}\label{high}
{\rm mult}_0 \Theta = \deg \PP C_0 (\Theta) \ge 3.
\end{equation}
We can then apply \cite[Theorem 3]{Casalaina2}, \cite[Corollary 2.2.4]{Casalaina1}  in order to deduce that $(A, \Theta)$ is either the intermediate Jacobian of a 
cubic threefold, or the Jacobian of a hyperelliptic curve $C$.  In the latter case it follows from \cite[Theorem 5.1]{Debarre} that $V$ must be a translate of $W_2(C)$ or $-W_2(C)$. However, in this case $V$ is singular, which contradicts the hypothesis.

It remains then to prove (\ref{high}).    To this end, note first that since $V$ is nondegenerate as it has minimal class, $\PP C_0 \Theta$ cannot be contained in a hyperplane in $\PP^4$.   Indeed, since   the Gauss map $\Gamma$ factors through the inclusion of the Fano variety of lines in  $\PP C_0\Theta$ \eqref{E:GaussFact}, the image of the Gauss map would 
be contained in a Schubert cycle $\Sigma_1$, 
 which would precisely mean that $V$ was geometrically degenerate; see \S\ref{scn:nondegeneracy}.
 This would be  a contradiction. 

Our task is therefore to show that $Q = \PP C_0 \Theta$ cannot be a quadric; assuming that it is, we will aim for a contradiction. 
If $Q$ is a singular quadric, then  by \propositionref{P:FanoQuad} every irreducible component of the Fano variety $F$ is contained in some  $\Sigma_1 \subset \GG(1, 4)$, again contradicting the nondegeneracy of $V$.

Suppose finally that $Q$ is a smooth quadric. Since $\deg(g)=6$ by \lemmaref{lem:deg(f)}, then according to \lemmaref{formula}   we have
\begin{equation}\label{E:s2}
c_1(V)^2 - c_2 (V) = \deg (g) \cdot \operatorname{mult}_0\Theta =12.
\end{equation}
At the same time, under the standard isomorphism $F\cong {\bf P}^3$ of the Fano variety of lines on a smooth quadric threefold with projective $3$-space,   and the factorization of the Gauss map
$$
V\longrightarrow F\hookrightarrow \mathbf G(1,4),
$$
we must have that the push-forward of the class of  $[V]$ to  $F$ is equal to  $n[H]$ for some $n$, where $H$ is a hyperplane in 
${\bf P}^3$.   Let $\gamma:F\hookrightarrow \mathbf G(1,4)$ be the inclusion.  
Since by \propositionref{P:FanoQuad} we have
$\gamma_*[H]=2\sigma_{31}+2 \sigma_{22}$, 
it follows that  
$$\Gamma_*[V]=\gamma_*n[H]=2n\sigma_{31}+2n \sigma_{22}.$$
Therefore, in particular, $\sigma_{11} \cdot \Gamma_*[V] = \sigma_2 \cdot \Gamma_*[V]$.  Combining  \eqref{E:Gam*} and  \eqref{E:s2}, we have  that $\sigma_2 \cdot \Gamma_*[V]=c_1(V)^2-c_2(V)=12$.  
From  \eqref{E:Gam*}  we therefore also have $c_2(V)=\sigma_{11} \cdot \Gamma_*[V]=12$.  
We conclude that $ c_1(V)^2=24$.     Noether's formula then implies
$\chi(\mathscr O_V)=3$,  contradicting  \propositionref{P:ChiEst}.

\subsubsection{$V$ is isomorphic to the Fano surface} \label{subsec:FcongS}
We have established that there is an isomorphism $(A,\Theta)\cong (JY,\Theta_Y)$ for some cubic threefold $Y$. 
Let $F$ be the Fano surface of lines on $Y$; we now wish to show that $V\cong F$.  

We start by recalling a few facts about $F$. First, from Clemens--Griffiths \cite{CG}  we have that $JY\cong \operatorname{Alb}(F)$, the Albanese embedding of $F$ has class $[F]=\theta^3/3!$, and $F-F=\Theta$.   
 In other words, $F\subseteq A$ also satisfies the hypotheses of the theorem.  

From the (degree $6$)  difference map $g_V:V\times V\to \Theta_Y$, and the fact established by Mumford \cite{Mumford} that 
${\bf P}C_0\Theta_Y\cong Y$,  we obtain that the Gauss map $\Gamma_V$ factors \eqref{E:GaussFact} as 
$$
\Gamma_V:V\longrightarrow F\stackrel{i}{\hookrightarrow} \mathbf G(1,4).
$$
Since, as $V$ is smooth and nondegenerate, we have seen earlier in \S\ref{S:G-Map} that $\Gamma_V$ is  finite, it follows that the image of $V$ is $F$.  Thus, in $H^4(\mathbf G(1,4),\mathbf Z)$ we have 
$
\Gamma_{V*}[V]=n_V i_{*}[F]
$
for some number $n_V$, so that 
$$
\sigma_2 \cdot \Gamma_{V*}[V]=n_V\sigma_2 \cdot i_{*}[F].
$$
 In the special case that $V=F$, we know that the degree of the Gauss map is $1$ (i.e, $n_F=1$).   In particular, 
 $$\sigma_2 \cdot i_*[F]=\sigma_2 \cdot \Gamma_{F*}[F]=c_1(F)^2-c_2(F)=\deg (g_F)\cdot \operatorname{mult}_0\Theta_Y= 6 \cdot 3.$$   
 Using Remark \ref{surface_class}, the same computation shows that $\sigma_2 \cdot \Gamma_{V*}[V]=18$, as well.  Therefore $n_V=1$, 
 and $\Gamma_*[V]=i_*[F]$.  In particular, the degree of the finite  Gauss map $\Gamma:V\to F$ is $1$, and so $V\cong F$
 since $F$ is smooth.

\subsubsection{$V$ can be identified with a translate of the Fano surface} \label{subsubsec:S=F}
Here we show that there is an isomorphism $(A,\Theta)\cong (JY,\Theta_Y)$ which induces the isomorphism between 
$V$ and $F$ constructed in \S\ref{subsec:FcongS} above. This will finish the proof of \theoremref{T:cubics}.

Consider the inclusion of $V\hookrightarrow A$.   From the universal property of the Albanese, we obtain a commutative diagram
$$
\begin{tikzcd}
  V \arrow{d}[swap]{\cong} \arrow[hook]{r}& A \\
F \arrow[hook]{d}[swap]{\operatorname{Alb}_F}& \\
\operatorname{Alb}(F)=JY  \arrow[dotted]{ruu}{\xi}&
  \end{tikzcd} 
$$
In other words, 
since $JY$ is isomorphic to the Albanese variety of $V\cong F$ \cite{CG}, there is a morphism
$
\xi:JY\rightarrow A 
$,
whose restriction to $F\subset JY$ yields an isomorphism 
$$\xi |_{F}:F\stackrel{\sim} \longrightarrow V.$$
Since $V$ generates $A$, $\xi$ is surjective, and  hence an isogeny for dimension reasons.  Our goal is to show that the isogeny $\xi$ is an isomorphism. 

To this end, observe that 
$$
\xi(\Theta_Y)=\xi(F-F)=V-V=\Theta .
$$
In fact, $\xi |_{\Theta_Y}:\Theta_Y\rightarrow \Theta $ is generically finite of degree one, because  $\xi |_{F}$
is an isomorphism and the difference maps $V\times V\rightarrow V-V$ and $F\times F\rightarrow F-F$ both have degree $6$.  It follows that $\xi |_{\Theta_Y}:\Theta_Y\rightarrow \Theta $ is birational.
Since $JY$ is the Albanese variety of $\Theta_Y$ and $A$ is the Albanese variety of $\Theta$, we obtain an induced morphism
$$
\alb(\xi |_{\Theta_Y}):JY\longrightarrow A 
$$
which coincides with $\xi$, since its restriction to $\Theta_Y$ has that property.
On the other hand, $\alb(\xi |_{\Theta_Y})$ is an isomorphism, because $\xi |_{\Theta_Y}$ is birational.
We conclude that 
$$\xi:(JY,\Theta_Y)\longrightarrow (A,\Theta)$$ 
is an isomorphism of ppavs which identifies $F$ with $V$.

\begin{remark} \label{rem:Lombardi-Tirabassi}
If in the proof of \theoremref{T:cubics} one assumes additionally that $\mathcal I_V(\Theta)$ is a $GV$-sheaf, then the conclusion of \S\ref{subsubsec:S=F} follows immediately from the paper \cite{LT} of  Lombardi and Tirabassi, where it is shown that the inclusion of a $GV$-subvariety does not factor through any nontrivial isogeny.
\end{remark}

\section{Reduction to the known cases}

\subsection{} \label{subsec:thm:degg}

The purpose of this section is to prove the following result, \theoremref{thm:degg}, which reduces \theoremref{main} to \theoremref{thm:V=sing}, \theoremref{thm:V=W} and \theoremref{T:cubics} above.

\begin{theorem} \label{thm:degg}
Let $(A, \Theta)$ be an indecomposable ppav of dimension $5$, and let $V, W\subset A$ be smooth surfaces such that $[V] = [W] = \frac{\theta^3}{3!}$ and $V  + W = \Theta$. 
Let 
$$g:V\times V\longrightarrow V-V \subset A$$ 
denote the difference map. Then $\deg(g)\in\{1,6\}$, and moreover,

\begin{enumerate}
\item if $\deg(g)=1$, then $V+V$ is a translate of $\Theta$.
\item if $\deg(g)=6$, then $V-V$ is a translate of $\Theta$. 
\end{enumerate}
\end{theorem}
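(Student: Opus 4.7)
The plan is to analyze the difference map $g\colon V\times V\to V-V$ using \theoremref{thm:propP}, which tells us that $W$ has property $(\mathcal{P})$ with respect to $V$, and to study the preimages $g^{-1}(W-w)$ for $w\in W$. A Pontryagin product computation analogous to \lemmaref{lem:deg(f)} gives $[V-V]=\tfrac{6}{\deg(g)}\theta$, so $\deg(g)\in\{1,2,3,6\}$ a priori. The case $\deg(g)=6$ gives $[V-V]=\theta$, so $V-V$ is a translate of $\Theta$ (being an irreducible divisor of class $\theta$), yielding~(2). The rest of the argument settles~(1) and rules out $\deg(g)\in\{2,3\}$ via reduction to the Jacobian situation.

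First I would establish that $V-V=W-W$. The addition map $V\times W\to\Theta$ has degree $6$, so a general $\theta_0\in\Theta$ decomposes in six ways as $v_i+w_i$, and the identities $v_i-v_j=w_j-w_i$ produce a Zariski-dense subset of $V-V$ contained in $W-W$; both being irreducible $4$-dimensional subvarieties, they are equal. For $w\in W$ general, $g^{-1}(W-w)$ contains the diagonal $\Delta_V$ as a $2$-dimensional component (since $0\in W-w$), and by \lemmaref{lem:debarre:propP:W-W} it is the unique component dominating both projections onto $V$. Any other $2$-dimensional component $C$ of $g^{-1}(W-w)$ is then of one of two types: (a) a product of curves $V'\times V''\subset V\times V$; or (b) dominant onto $V$ via exactly one projection and with curve image under the other. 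The degrees $\deg(g|_{C_i})$ of the components covering $W-w$ sum to $\deg(g)$.

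The key step is to show that, in each of the cases $\deg(g)\in\{1,2,3\}$, at least one covering component must be of type~(a). Granting this, $g(V'\times V'')=V'-V''=W-w$ (up to translation), so $W$ is a translate of $V'+(-V'')$, giving $W$ a curve summand; by \cite{Schreieder1}, $\Theta=V+W$ also has a curve summand and $(A,\Theta)\cong(JC,\Theta_C)$ for some smooth genus~$5$ curve $C$. Debarre's theorem \cite[Theorem~5.1]{Debarre} identifies $V$ with a translate of $\pm W_2(C)$, and in either sign choice $V+V$ is a translate of $W_4(C)=\Theta_C$, establishing~(1) when $\deg(g)=1$. Moreover, in the resulting Jacobian one computes $\deg(g)=1$ if $C$ is non-hyperelliptic (a generic $[D-E]\in W_2(C)-W_2(C)$ determines $(D,E)$ uniquely, giving $[W_2-W_2]=6\theta$) and $\deg(g)=6$ if $C$ is hyperelliptic (since then $-W_2$ is a translate of $W_2$, giving $[W_2-W_2]=\theta$). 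Both outcomes contradict $\deg(g)\in\{2,3\}$, ruling those cases out.

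The main obstacle is establishing the key step: that a type~(a) component must exist. This requires using the symmetric form of \theoremref{thm:propP} (property $(\mathcal{P})$ of $V$ with respect to $W$) applied to the addition map $f\colon V\times W\to\Theta$, together with a dimension analysis of the intersections $V\cap(v_1+W-w)$ for $v_1$ varying in the curve $\operatorname{pr}_1(C)$ of a hypothetical type~(b) component; the goal is to show that a purely type~(b) configuration would produce a subvariety of $f^{-1}(v+W)\subset V\times W$ dominating both $W$ and $v+W$ besides $\{v\}\times W$, contradicting property $(\mathcal{P})$. The degree~$3$ case is expected to be the most delicate, as $\deg(g)=3$ admits more partitions of the covering degree among type~(b) components that must each be excluded separately.
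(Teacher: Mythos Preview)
Your overall strategy matches the paper's: reduce to the Jacobian case by producing a curve summand of $V$ or $W$, then invoke \cite{Schreieder1} and \cite{Debarre}. The $\deg(g)=6$ case and the deduction of~(1) once $(A,\Theta)$ is known to be a Jacobian are correct. But the ``key step'' is where essentially all the content lies, and your sketch for it does not go through. Two preliminary issues: your argument for $V-V=W-W$ does not show that the differences $v_i-v_j$ arising from the six decompositions of a general $\theta_0$ are Zariski-dense in $V-V$ (one needs property~$(\mathcal P)$ here, as in the paper's \lemmaref{lem:V-V=W-W}); and your type~(a)/(b) dichotomy omits components with one projection equal to a single point, i.e.\ $v\times V$ or $V\times v$, which correspond to $W$ being a translate of $\pm V$ and must be handled separately.

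More seriously, your proposed mechanism for forcing a type~(a) component---manufacturing a forbidden subvariety of $f^{-1}(v+W)$ from a purely type~(b) configuration---does not work as stated: a type~(b) component with ${\rm pr}_1$ surjective gives, for fixed general $v_1$, only finitely many extra points of $f^{-1}(v_1+w)$, not a surface in $f^{-1}(v_1+W)$ dominating $W$. The paper's argument is substantially different and does not proceed uniformly through $g^{-1}(W-w)$. The central device for $\deg(g)\in\{1,2\}$ is an \emph{Albanese factorisation}: the components $R_w$ (resp.\ $R_v$) assemble into a family over $W$ (resp.\ $V$), yielding a rational map from a product of smooth surfaces to $V\subset A$; this extends to a morphism and, via the universal property of the Albanese, splits as a sum $\varphi+c$ of morphisms from the two factors, and a dimension count on $\im(\varphi)$, $\im(c)$ forces a curve summand of $V$. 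For $\deg(g)=2$ one must first analyse $g^{-1}(v-V)$ (not $g^{-1}(W-w)$) to reduce to $V=-V$, and then prove a separate irreducibility lemma for $R_w$. For $\deg(g)=3$ the method changes again: one shows ${\rm pr}_2(R_{v_1})=C(v_1)\subset V$ is an irreducible curve, constructs a \emph{dominant} rational map $V\dashrightarrow C(v_1)^{(2)}$, obtains an injection of Hodge structures $H^1(\widetilde{C}(v_1),\ZZ)\hookrightarrow H^1(V,\ZZ)$, and uses the Torelli theorem to conclude that the family $\{\widetilde{C}(v_1)\}_{v_1}$ is isotrivial---only then does the curve summand of $V$ appear. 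None of these ingredients is visible in your outline, and your anticipated ``dimension analysis of $V\cap(v_1+W-w)$'' does not substitute for them.
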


\noindent
The rest of the section is devoted to the proof of this theorem; we note that the case where $V$ is a translate of $-W$ is immediate by \lemmaref{lem:deg(f)}, and that we can use \theoremref{thm:V=W} to deal with the cases where $V$ is a translate of $W$.

\subsection{}
We begin by recalling that by \lemmaref{lem:deg(f)},
$$
\deg(g)\in \{1,2,3,6\} .
$$
If $\deg(g)=6$, then $[V-V]=[\Theta]$ and so that statement of \theoremref{thm:degg} is immediate. 
The other cases are much more difficult, and will be treated in the following subsections respectively. 
Our notation will always be that of \theoremref{thm:degg}. 

Before we turn to our case by case study, let us first prove the following general lemma, which we will use repeatedly.

\begin{lemma} \label{lem:V-V=W-W}
Under the assumptions of \theoremref{thm:degg}, we have $V-V=W-W$.
\end{lemma}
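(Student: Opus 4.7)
The plan is to consider the fiber product $Z:=(V\times W)\times_\Theta(V\times W)$ formed via the addition morphism $f\colon V\times W\to\Theta$, which is generically finite of degree $6$ by \lemmaref{lem:deg(f)}. Thus $\dim Z=2\dim(V\times W)-\dim\Theta=4$, and a general point of $\Theta$ has $36=6^2$ preimages in $Z$. Of these, the $6$ lying on the diagonal $\Delta\cong V\times W$ form one $4$-dimensional component, while the remaining $30$ populate the off-diagonal locus $Z':=Z\setminus\Delta$, which is therefore of pure dimension $4$ as well.

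Define the difference map $\sigma\colon Z\to A$ by $\sigma\bigl((v_1,w_1),(v_2,w_2)\bigr)=v_1-v_2=w_2-w_1$, where the two expressions coincide because of the defining relation $v_1+w_1=v_2+w_2$. A direct check shows that the set-theoretic image of $\sigma$ is precisely $(V-V)\cap(W-W)$: given $\delta$ in this intersection, choose $v_1,v_2\in V$ and $w_1,w_2\in W$ realizing $\delta$ in each difference set, and observe that $v_1+w_1=v_2+w_2$. Since $\sigma(\Delta)=\{0\}$, the nontrivial content is carried by $\sigma|_{Z'}$; the plan is to show that $\sigma|_{Z'}$ is generically finite onto its image, so that $(V-V)\cap(W-W)$ has dimension $4$, which, by the irreducibility of both $V-V$ and $W-W$ inside the $5$-dimensional $A$, forces $V-V=(V-V)\cap(W-W)=W-W$.

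The fiber of $\sigma$ over a point $\delta\in A$ is $\bigl(V\cap(V+\delta)\bigr)\times\bigl(W\cap(W-\delta)\bigr)$ minus its diagonal, which is finite for $\delta$ outside the proper closed ``excess loci'' of $V-V$ and $W-W$ where the respective difference maps $g_V\colon V\times V\to V-V$ and $g_W\colon W\times W\to W-W$ have positive-dimensional fibers. The main obstacle is ruling out the possibility that $\sigma(Z')$ is entirely contained in the union of these excess loci; here I would invoke \theoremref{thm:propP}, which says that $V$ has property $(\mathcal P)$ with respect to $W$ and vice versa, and apply \lemmaref{lem:debarre:propP:W-W} to both pairs to constrain the components of $Z'$ sufficiently tightly that at least one of them projects dominantly onto a $4$-dimensional subvariety of $A$, thereby closing the argument.
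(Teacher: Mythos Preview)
Your fiber-product setup is correct and pleasant: the identification $\sigma(Z)=(V-V)\cap(W-W)$ is valid, and you are right that once $\dim\sigma(Z')=4$ the irreducibility of $V-V$ and $W-W$ finishes the argument. The gap is that you never actually establish $\dim\sigma(Z')=4$; you only gesture at property~$(\mathcal P)$ and \lemmaref{lem:debarre:propP:W-W}. Unwinding what these give you in your framework leads straight back to the paper's analysis, and in particular you cannot avoid the case distinctions (Jacobian case, curve-summand case) that you have not mentioned.

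Concretely: fix a component $Z''\subseteq Z'$ dominating $\Theta$. Then $\mathrm{pr}_{12}$ and $\mathrm{pr}_{34}$ are dominant onto $V\times W$, and $\sigma|_{Z''}$ has $4$-dimensional image iff $\mathrm{pr}_{13}\colon Z''\to V\times V$ is dominant. For general $v\in V$, the surface $\mathrm{pr}_{34}\bigl(\mathrm{pr}_1^{-1}(v)\cap Z''\bigr)$ sits inside $f^{-1}(v+W)\setminus(v\times W)$ and dominates $v+W$ via $f$; this is exactly a piece of the paper's ``$R$''. Property~$(\mathcal P)$ of $V$ with respect to $W$ tells you its second projection to $W$ is \emph{not} surjective, but that alone does not force $\mathrm{pr}_{13}$ to be dominant: the second projection could be a point (forcing $V$ to be a translate of $W$), or both projections could be curves (forcing a curve summand of $W$). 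The paper handles these cases separately, using \theoremref{thm:V=W} and \cite{Schreieder1} to reduce to the Jacobian situation, where the lemma is immediate via \cite{Debarre}. Only after excluding these does one component $R'\subseteq R$ satisfy $\mathrm{pr}_1(R')=V$, which is precisely the statement $\mathrm{pr}_{13}(Z'')=V\times V$ in your language, and hence $V-V\subseteq W-W$.

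So your approach is not wrong, but it is a repackaging rather than a shortcut: the substantive step you defer is exactly the content of the paper's proof, and it requires the auxiliary reductions you omitted.
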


\begin{proof}
If $(A,\Theta)$ is isomorphic to the Jacobian of a smooth curve $C$, then $V$ and $W$ are translates of $\pm W_2(C)$ 
by \cite{Debarre}, and so the lemma holds.
We may therefore assume in what follows that $(A,\Theta)$ is not isomorphic to a Jacobian.

Since $V$ and $W$ have minimal class, the addition morphism $f:V\times W\longrightarrow \Theta$ has degree $6$.
For a general point $v\in V$, the reduced preimage of $v+W$ decomposes as
\[
f^{-1}(v+W)^{\red}=(v\times W) \cup R \cup Q ,
\] 
where $f(Q)\subsetneq v+W$ and each component of $R$ dominates $v+W$ via $f$. 
For general $w\in W$, $f^{-1}(v+w)$ is a union of $6$ distinct points and so $f|_R:R\longrightarrow v+W$ has degree $5$.
In particular, $R$ is nonempty.
By \theoremref{thm:propP}, $V$ has property $(\mathcal P)$ with respect to $W$, which means that ${\rm pr}_2(R)\subsetneq W$.  
If $R$ contains a component $R'$ such that ${\rm pr}_2(R')$ is a point, then $R'=V\times w$.
Applying $f$ shows that $V$ is a translate of $W$.
It then follows from \theoremref{thm:V=W} that $(A,\Theta)$ is isomorphic to the Jacobian of a smooth curve, which contradicts our assumption.

We may therefore assume that there is a component $R'$ of $R$  whose image under the second projection is a curve 
$${\rm pr}_2(R')=C_v\subset W.$$
If ${\rm pr}_1(R')$ is also a curve, then $W$ (and hence $\Theta$) has a curve summand and so $(A,\Theta)$ is the Jacobian of a smooth curve by \cite{Schreieder1}.
We may thus assume that ${\rm pr}_1(R')=V$.
This means that for general $v_2\in V$ there are points $w_1,w_2\in W$ with
\[
v_2+w_2=v+w_1.
\] 
In particular $v-v_2\in W-W$.
Since we can make this argument for each general $v$, and $v_2$ was chosen generally,  the containment holds for general $(v,v_2)\in V\times V$, and it follows that $V-V\subseteq W-W$.
The lemma follows now from the irreducibility, reducedness and non-degeneracy of $V$ and $W$, which implies that $V-V$ and $W-W$ are both irreducible, reduced and of the same dimension; for the latter, see for instance \lemmaref{lem:deg(f)}.
\end{proof}

\subsection{The case $\deg(g)=1$} \label{subsec:deg=1'} 
Our goal will be to show that either $V$ has a curve summand, or that  $\pm V$ is a translate of $W$.   
In the latter case, if $V$ is a translate of $W$, we can use \theoremref{thm:V=W}  to conclude, and if $V$ is a translate of $-W$, the assertion is immediate by \lemmaref{lem:deg(f)}.  
In the former case,   if we succeed in showing that  $V$ has a curve summand, then it will follow that $\Theta$ has a curve summand and so $(A,\Theta)$ is isomorphic to the Jacobian of a smooth curve $C$ by \cite{Schreieder1}.
Since $V$ has minimal class, one can by \cite{Debarre} find an isomorphism $(A,\Theta)\cong (JC,\Theta_C)$ that identifies $V$ with $W_2(C)$.
In particular, $V+V=\Theta$ because $W_2(C)+W_2(C)=\Theta_C$.  

Therefore, we now proceed to show that either  $V$ has a curve summand, or that  $\pm V$ is a translate of $W$.
If $\deg (g)=1$, then  $g$ is birational.  
Consequently, for general $w\in W$, the reduced preimage
$$
g^{-1}(W-w)^{\red}\subseteq V\times V 
$$
 contains a component $R_w$ that  is birational to $W$.  
 For $i=1,2$, this gives rise to a rational map
\[
\varphi_{i,w}:W\stackrel{\sim}\dashrightarrow R_w \stackrel{{\rm pr}_i}\longrightarrow V ,
\]
which depends on $w\in W$.

 We want to study the maps $\varphi_{i,w}$ as $w$ varies.  To this end, 
for $i=1,2$ consider the diagram
$$
\begin{tikzcd}
V\times V  \arrow{d}[swap]{\operatorname{pr}_i} 
& \arrow[dashed]{l}[swap]{g^{-1}} 
V-V=W-W & W\times W \arrow{l} \arrow[dashed]{lld}{\Phi_i} \\
V& 
\end{tikzcd}
$$
where $\Phi_i$ is defined to be the rational map obtained via composition.  Since $W$ is smooth and $V\subseteq A$, we have that $\Phi_i$ is in fact a morphism.  Moreover, $\Phi_i$, viewed as a morphism to $A$,  factors through the Albanese $\operatorname{Alb}(W)\times \operatorname{Alb}(W)$.   Since, after translation,  the induced map $\operatorname{Alb}(W)\times \operatorname{Alb}(W)\to A$ decomposes as a sum of homomorphisms, we may write
$$
\Phi_i=\varphi_i+c_i:W\times W\longrightarrow V
$$
with $\varphi_i,c_i:W\to A$.  
In particular, 
\begin{align} \label{eq:im(c_i)'}
V=\im (\varphi_i)+\im (c_i)
\end{align}
for $i=1,2$.  By construction, for general $w\in W$  we have $\varphi_{i,w}(w')=\varphi_i(w')+c_i(w)$ for all $w'\in W$.  

The first  claim is that either $\im(\varphi_1)$ or $\im(\varphi_2)$ has dimension $\leq 1$.   Indeed, for  general $w\in W$, 
\begin{equation}\label{E:Rw}
\varphi_i(W)+c_i(w)=\Phi_i(W,w)=\operatorname{pr}_i(R_w).
\end{equation}
By \theoremref{thm:propP}, $W$ has property $(\mathcal P)$ with respect to $V$, and 
this implies by \lemmaref{lem:debarre:propP:W-W} that 
the only subvariety of $g^{-1}(W-w)$ that  dominates both factors of $V\times V$ is the diagonal.  In particular, $R_w$ does not dominate both factors, so that by \eqref{E:Rw} one of the  images $\varphi_i(W)$ must not be a  surface,  establishing the claim.

Now we are ready to conclude.  If $\im(\varphi_1)$ or $\im(\varphi_2)$ is a curve, then $V$ has a curve summand by (\ref{eq:im(c_i)'}).
If $\im(\varphi_1)$ or $\im(\varphi_2)$ is a point, then by the construction of $\varphi_i$, we have that $g^{-1}(W-w)^{\operatorname{red}}$ is of the form $v\times V$ or $V\times v$ for some point $v \in  V$. Applying $g$ to this shows that $\pm V$ is a translate of $ W$.    
This completes the proof of \theoremref{thm:degg} in the case where $\deg(g)=1$.

\subsection{Ruling out $\deg(g)=2$} \label{subsec:deg=2'}
In this section we will derive a contradiction from the assumption that $\deg(g)=2$.   The proof is somewhat lengthy, and we break it  up into several parts.

\subsubsection{Some preliminary observations}
We start by explaining why we may assume:
\begin{itemize}
\item $(A,\Theta)$ is not isomorphic to the  Jacobian of a smooth curve.
\item $V$ and $W$ do not admit curve summands.
\item $V$ is not a translate of $\pm W$.
\end{itemize}
Indeed,  if  $(A,\Theta)$ were isomorphic to the Jacobian of a smooth curve $C$, then 
 $C$ would have to be nonhyperelliptic since by \cite{Debarre}, $V=W_2(C)$ is assumed to be smooth, and then the  difference map 
$g:W_2(C)\times W_2(C)\rightarrow W_2(C)-W_2(C)$
 would  have degree $1$.
Consequently, since $(A,\Theta)$ can be assumed not to be a Jacobian, and $\Theta=V+W$, by \cite{Schreieder1} we may further assume that $V$ and $W$ have  no curve summand.
Finally, as mentioned previously, we may by \lemmaref{lem:deg(f)} and \theoremref{thm:V=W} assume that $V$ is not a translate of $\pm W$.

Going forward, the  main observation is the following.  For general $v\in V$, 
\[
g^{-1}(v-V)^{\red}=(v\times V)\cup R_v\cup Q ,
\]
where $g( Q)\subsetneq v-V$ and each component of $R_v$ dominates $v-V$ via $g$.\footnote{For instance, the diagonal $\Delta_V$ is contained in $Q$.}
Since $g^{-1}(v-v_2)$ is given by two distinct points if $v_2\in V$ is general, $R_v$ is irreducible and birational to $V$.
This allows us to consider the rational map
\begin{equation}\label{E:psiv}
\psi_{i,v}:V\stackrel{\sim}\dashrightarrow R_v \stackrel{{\rm pr}_i}\longrightarrow V .
\end{equation}

\subsubsection{The global description of the $\psi_{i,v}$}\label{S:GDPsi'} We want to study the maps $\psi_{i,v}$ as $v$ varries.  We will use this to arrive at the main goal of this subsection,  the description of $V$ as the image of the sum of maps in \eqref{eq:im(d_i)'}, below.

To this end, consider the map   $$(g\times 1_V):V\times V\times V\longrightarrow  (V-V)\times V$$ 
$$
(v_1,v_2,v)\mapsto (v_1-v_2,v)
$$
 and let $\mathscr V\subseteq  (V-V)\times V$ be given by the image  $(g\times 1_V)(\operatorname{pr}_{13}^{-1}(\Delta_V))$, where $\Delta_{V}$ denotes the diagonal in $V\times V$.   The fiber of $\mathscr V$ over a point $v\in V$ via the last projection is given by $\mathscr V_v^{\operatorname{red}}=(v-V)\times v$.   In other words, $\mathscr V$ is the family of $v-V$.  

The points of   $\mathscr G:=(g\times 1_V)^{-1}(\mathscr V)$ can be described set theoretically as 
$$
\mathscr G=(g\times 1_V)^{-1}(\mathscr V)=\{(v_1,v_2,v)\in V\times V\times V:  v_1-v_2\in v-V\}.
$$
By construction, the fiber of $\mathscr G$ over a point $v\in V$ via the last projection is given by $\mathscr G_v^{\operatorname{red}}=g^{-1}(v-V)^{\operatorname{red}}\times v$.  In other words, $\mathscr G$ is the family of $g^{-1}(v-V)^{\operatorname{red}}$.  

Clearly  $\operatorname{pr}_{13}^{-1}(\Delta_V)$ is one component of $\mathscr G$. We also know that $\mathscr G\to \mathscr V $ has degree two generically and so there is a unique second component 
$$\mathscr R \subseteq \mathscr G
\subseteq V\times V\times V
$$ with the property that $g\times 1_V$ restricted to $\mathscr R$ yields a dominant map $\mathscr R\to \mathscr V$. 
By construction, the fiber of $\mathscr R$ over a point $v\in V$ via  the last projection satisfies 
  $$(\mathscr R_v)^{\operatorname{red}}=R_v\times v;$$ 
i.e.,  $\mathscr R$ is the family of $R_v$.  

We next claim that $\mathscr R$ is birational to $V\times V$.  
Indeed,  consider the map over $V$
$$
F:\mathscr R \longrightarrow A \times V,
$$
$$
F(v_1,v_2,v)=(v-(v_1-v_2),v).
$$
From the definition of $\mathscr G$, the image of this map lies in $V\times V$.  Restricted to general fibers, this map $F$ is the birational map between  $R_v\times v $ and  $ V\times v$ mentioned above, and therefore $F$ is generically one-to-one, and so $\mathscr R$ is birational onto $V\times V$.

Finally, for $i=1,2$, define the map $\Psi_i$ as the composition 
$$
\Psi_i:V\times V\stackrel{F^{-1}}{\dashrightarrow} \mathscr R \stackrel{\operatorname{pr}_i}{\longrightarrow} V
$$
where $\operatorname{pr}_i$ is the projection onto the $i$-th factor.  

As in \S \ref{subsec:deg=1'}, we may use the Albanese to conclude that 
$$
\Psi_i=\psi_i+d_i:V\times V\longrightarrow V
$$
with $\psi_i,d_i:V\to A$.  
In particular, 
\begin{align} \label{eq:im(d_i)'}
V=\im (\psi_i)+\im (d_i)
\end{align}
for $i=1,2$.  By construction, for general $v\in V$  we have $\psi_{i,v}(v')=\psi_i(v')+d_i(v)$ for all $v'\in V$.

\subsubsection{Reducing to the case $V=-V$}  The purpose of this subsection is to use \eqref{eq:im(d_i)'} to reduce  to the case
\begin{align} \label{eq:V=-V'}
V=-V .
\end{align} 
The first  claim in this direction is that either $\im(\psi_1)$ or $\im(\psi_2)$ has dimension $\leq 1$.  
Indeed, since $V$ is geometrically nondegenerate, 
we know that the addition map of $V$ with any subvariety has the expected dimension.  Consequently, from \eqref{eq:im(d_i)'},  
if  $\psi_i(V)$ were a surface, and hence a translate of $V$, then $d_i$ would have to be constant.  If both the maps  $d_i$ were constant, then we would have  that
$$
R_v=\{(\psi_{v,1}(v'),\psi_{v,2}(v')) \mid  v'\in V\}=\{(\psi_{1}(v')+d_1,\psi_{2}(v')+d_2) \mid  v'\in V\}
$$
would be independent of $v$.  But this is a contradiction  since   $g(R_v)=v-V$, and the $v-V$ move (for instance since $V-V$ is of dimension $4$  by the geometric nondegeneracy of $V$).  This establishes the claim  that either $\im(\psi_1)$ or $\im(\psi_2)$ has dimension $\leq 1$.

Now, if the dimension of the image of $\psi_i$ is one, then by \eqref{eq:im(d_i)'}, $V$ would have a curve summand, which we are assuming is not the case.  Therefore, one of the $\psi_i$ is constant.  

If $\psi_1$ is  constant, then (as at the end of  \S \ref{subsec:deg=1'}) $R_v=v'\times V$ for some $v'\neq v$ (here we take $v'\ne v$ since the case $v'=v$ corresponds to the component  of $g^{-1}(v-V)^{\operatorname{red}}$ that we specifically left out of $R_v$ in the definition). 
This cannot happen  because $v-V=v'-V$ implies $v-V-W=v'-V-W$, and hence $v=v'$, since $-V-W$ is a translate of 
$\Theta$.
On the other hand, if $\psi_2$ is constant, then  $R_v=V\times v'$.
This  implies $v-V=V+v'$ for some $v'\in V$. 
Up to replacing $V$ by a translate, we may therefore assume $V=-V$.

\subsubsection{The case $V=-V$}  We have now reduced to the case $V=-V$.  
This subsection is devoted to arriving at a contradiction under this  additional assumption.

To do this, we turn our attention to the structure of the $g^{-1}(W-w)$ for $w\in W$. 
By \theoremref{thm:propP}, $W$ has property $(\mathcal P)$ with respect to $V$.
Moreover,  by \lemmaref{lem:V-V=W-W},
\[
V-V=W-W .
\]
By \lemmaref{lem:debarre:propP:W-W}, it follows that for $w\in W$ general,
\[
g^{-1}(W-w)^{\red}=R_w\cup Q_w
\]
with $R_w$ and $Q_w$ satisfying:   $g(Q_w)\subsetneq W-w$,  every component of $R_w$ dominates $W-w$ via $g$,
and  no component of $R_w$ dominates both factors of $V\times V$.
The restriction of $g$ to $R_w$ has degree $2$.
Moreover, $R_w$ is anti-symmetric in the following sense: if we let $\sigma:V\times V\to V\times V$ be the map interchanging the factors, then  since $V=-V$,  we have that $R_w=-\sigma(R_w)\subseteq V\times V$.

The main claim, proven in \lemmaref{lem:R_w=irred'} below, is that   $R_w$ is irreducible.  We assume this for the moment, and complete the argument.  
We immediately obtain from irreducibility that $R_w$ is not mapped  via either  projection surjectively onto $V$, since then by anti-symmetry, it would  be mapped surjectively onto both factors, which we have shown is not possible.  Similarly, we obtain that $R_w$ is not mapped  via either  projection to a point. 
 Indeed, without loss of generality, if  $\operatorname{pr}_1(R_{w})=v$, then $R_{w}=v\times V$, and so by anti-symmetry   we have  $R_{w}=(V\times (-v))\cup (v\times V)$, contradicting the irreducibility of $R_w$ (and also  the fact that we have ruled out the possibility that  either projection of $R_w$ maps surjectively to $V$).

 So we may assume that ${\rm pr}_i(R_w)=C_i\subsetneq V$ is a curve, $i=1,2$.   But then we obtain an isomorphism $(\operatorname{pr}_1\times \operatorname{pr}_2):R_w\to C_1\times C_2\subseteq V\times V$, and composing with $g$ we would arrive at the conclusion that $W$ has a curve summand, which contradicts our assumption.  Therefore, up to showing that $R_w$ is irreducible, we have succeeded in showing  that $\deg(g)=2$ is not possible.

\begin{lemma} \label{lem:R_w=irred'} 
$R_w$ is irreducible.
\end{lemma}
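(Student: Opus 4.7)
The plan is to suppose for contradiction that $R_w=R_w'\cup R_w''$ has two irreducible components and derive a contradiction in each of the two cases arising from the $-\sigma$-symmetry. Since $g|_{R_w}$ has degree $2$ and each component dominates $W-w$, each $R_w^{(i)}$ maps birationally to $W-w$, and the anti-symmetry $-\sigma$ (which satisfies $g\circ(-\sigma)=g$) acts as the deck involution of this double cover; hence $-\sigma$ either preserves each component (Case A) or swaps them (Case B).

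In Case A, the invariance $-\sigma(R_w')=R_w'$ forces $\operatorname{pr}_1(R_w')=-\operatorname{pr}_2(R_w')$. Using $V=-V$, neither projection can be all of $V$ (else both would be, contradicting property $(\mathcal P)$ of \theoremref{thm:propP}), nor a single point (as invariance would then yield the contradiction $\{v_0\}\times V=V\times\{-v_0\}$). So both projections are curves $Z_1,-Z_1$, and since $R_w'$ is an irreducible surface inside the irreducible surface $Z_1\times(-Z_1)$ we get $R_w'=Z_1\times(-Z_1)$. Then $g(R_w')=Z_1+Z_1=W-w$ displays $W$ as having a curve summand, contradicting the reductions at the start of this subsection.

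In Case B, if both projections of $R_w'$ are curves, the same product argument produces a curve summand of $W$. So I may assume $\operatorname{pr}_1(R_w')=V$ and $\operatorname{pr}_2(R_w')=Z_2(w)$ is a curve. Here I will globalize: since $-\sigma$ swaps the components globally, the family $\mathscr R$ of $R_w$'s in $V\times V\times W$ splits as $\mathscr R=\mathscr R'\sqcup\mathscr R''$, and $\mathscr R'$ is birational to $W\times W$ via $g$. Mimicking the construction in \parref{S:GDPsi'}, the inverse rational map yields morphisms $P_i\colon W\times W\to V\subseteq A$ which by Albanese decompose as $P_i(w_0,w)=p_i(w_0)+q_i(w)$. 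The relation $P_1-P_2=w_0-w$ forces $p_1-p_2=\iota+\text{const}$ and $q_1-q_2=-\iota+\text{const}$ (with $\iota\colon W\hookrightarrow A$ the inclusion), while $p_1(W)+q_1(w)=V$ combined with the reduction that $V$ has no curve summand (so $\operatorname{stab}(V)$ is finite and $q_1(W)-q_1(W)\subseteq\operatorname{stab}(V)$ is connected hence trivial) forces $q_1$ to be constant. Then $q_2(w)=w+\text{const}$ is non-constant, and the condition that $Z_2(w)=p_2(W)+q_2(w)$ is one-dimensional forces $p_2(W)=:E$ to be a curve in $A$.

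The contradiction then comes from a dimension count. Since $Z_2(w)\subseteq V$ for each general $w$ and $Z_2(w)=E+w+\text{const}$, we obtain $W+E+\text{const}=\bigcup_{w}Z_2(w)\subseteq V$. But $V$ has dimension $2$, so $\dim(W+E)=3$ is impossible; hence $\dim(W+E)=2$, forcing $W+E=W$ and so $E\subseteq\operatorname{stab}(W)$. This gives $W$ a curve summand, contradicting the reductions via \cite{Schreieder1}. The main technical hurdle will be carefully executing the Albanese analysis in Case B and verifying that the ``no curve summand'' hypothesis on $V$ is exactly what is needed to force $q_1$ constant and $p_2(W)$ to be a curve rather than a point or a surface.
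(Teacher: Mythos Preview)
Your argument is correct but follows a genuinely different route from the paper. The paper does not invoke the $-\sigma$ case split: it globalizes the family of the $R_w$ directly (allowing a base change $W'\to W$ of degree at most $2$ to separate the two fibrewise components), decomposes the resulting maps $\Phi_{i,j}\colon W\times W'\to V$ via Albanese as $\varphi_{i,j}+c_{i,j}$, and then uses only that for each $j$ some projection $\operatorname{pr}_{i_j}(R_{w,j})$ is a curve (from property $(\mathcal P)$ together with $W$ not being a translate of $\pm V$) to exhibit $V$ as a sum with a curve factor, contradicting the standing reduction. Your approach instead exploits the anti-symmetry; two remarks are in order. First, your Case~A is in fact empty: if $-\sigma$ preserves a component $R_w'$ that maps birationally to $W-w$, then $-\sigma|_{R_w'}$ is an automorphism over $W-w$ of a degree-one cover, hence the identity, which forces $R_w'$ into the anti-diagonal and makes the two components coincide. (Your Case~A argument is valid, just superfluous.) Second, in the main sub-case of Case~B you push the Albanese analysis further than the paper does, using the relation $P_1-P_2=w_0-w$ to pin down $q_1$ as constant and $q_2$ as the inclusion up to translation; this lets you avoid the base change (the components are globally distinguished by which projection surjects onto $V$) and produces the contradiction $E+W+\mathrm{const}\subseteq V$ with $\dim(E+W)=3$ forced by the nondegeneracy of $W$. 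Note that this already \emph{is} the contradiction, so your closing ``hence $\dim(W+E)=2$'' is unnecessary and slightly misstated. Your route yields a more explicit structural picture and avoids the \'etale cover; the paper's route is shorter and funnels every case into the single mechanism ``$V$ has a curve summand''.
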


\begin{proof}
The basic idea is similar to the arguments we have been using.  If $R_w$ is reducible for general $w$, then the family $\mathscr R_W\to W$ of $R_w$ has geometrically reducible generic fiber (with two geometric  components).  Therefore, after a generically finite (degree $1$ or $2$) base change by say $W'\rightarrow W$, we may assume that the base change  $\mathscr R_{W'}$ of the family $\mathscr R_{W}$ has two components $\mathscr R_{W',j}\subseteq V\times V\times W'$, $j=1,2$.  By construction each component is birational to $W\times W'$.
We may thus consider for $i=1,2$ the induced rational maps 
$$
W\times W' \dashrightarrow \mathscr R_{W',j}\stackrel{\operatorname{pr}_i}{\longrightarrow}V ,
$$ 
then using the universal property of the  Albanese and the property $(\mathcal P)$ one obtains that  $V$ has a curve summand, giving a contradiction. 

The details are as follows.  Let $w\in W$ be general, and 
let us assume that 
\[
R_w=R_{w,1}\cup R_{w,2}
\] 
is reducible.  
Since $g|_{R_w}:R_w\rightarrow W-w$ has degree two, $R_{w,j}$ is birational via $g$ to $W-w$ for $j=1,2$.
Since $W$ has  property $(\mathcal P)$ with respect to $V$   by \theoremref{thm:propP}, \lemmaref{lem:debarre:propP:W-W} implies that for some $i_j\in\{1,2\}$,
$$
{\rm pr}_{i_j}:R_{w,j}\longrightarrow V
$$
is not surjective. 
By our assumptions $W$ is not a translate of $\pm V$ (which would be the case if the projection were a point), and so the above projection has image a curve 
$C_{w,i_j,j}\subsetneq V$.   

As usual, we are interested in the maps 
\[
\varphi_{w,i,j}:W\stackrel{\sim}{\dashrightarrow} R_{w,j}\stackrel{{\rm pr}_{i}}\longrightarrow V
\]
as $w$ varies in $W$.    The above discussion implies that for general $w\in W$, and $j=1,2$,  then for  some $i_j\in \{1,2\}$ we have $\operatorname{im}(\varphi_{w,i_j,j})=C_{w,i_j,j}$ is  a curve.

Now, as in \S \ref{S:GDPsi'}, we can construct
$$
\mathscr R_W\subseteq V\times V\times W
$$
with general fiber of the last  projection given by $R_w\times w$, as well as  a map 
$$
F:\mathscr R_W \longrightarrow A\times W
$$ 
over $W$ given by $F(v_1,v_2,w)=(w-(v_1-v_2),w)$, which has image in $W\times W$, and is generically $2:1$.  Since the general fiber $R_w$ is reducible, the  geometric generic  fiber of $\mathscr R_W$ is reducible, and there is a smooth projective surface $W'$ and a generically finite
 map $W'\rightarrow  W$ so that the base change $\mathscr R_{W'}=\mathscr R_{W',1}\cup \mathscr R_{W',2}$ is reducible (in fact, using the monodromy of the fibers, one can see that this can be taken to  be degree $1$ or degree $2$).   Via degree considerations on the fibers, the morphisms 
$$
F_j:\mathscr R_{W',j}\longrightarrow  W\times W'
$$
obtained from $F$ via base change, 
are birational.  

Now, for $i,j=1,2$, define the map $\Phi_{i,j}$ as the composition 
$$
\Phi_{i,j}:W\times W'\stackrel{F_j^{-1}}{\dashrightarrow} \mathscr R_{W',j} \stackrel{\operatorname{pr}_i}{\longrightarrow} V
$$
where $\operatorname{pr}_i$ is the projection onto the $i$-th factor.  
As in \S \ref{subsec:deg=1'}, we may use the Albanese to conclude that 
$$
\Phi_{i,j}=\varphi_{i,j}+c_{i,j}:W\times W'\longrightarrow V
$$
with $\varphi_{i,j}:W\to A$ and $c_{i,j}:W'\to A$.  
In particular, 
\begin{align} \label{eq:im(d_i)''}
V=\im (\varphi_{i,j})+\im (c_{i,j})
\end{align}
for $i,j=1,2$.  By construction, for general $w\in W$  we have $\varphi_{w,i,j}(\tilde w)=\varphi_{i,j}(\tilde w)+c_{i,j}( w')$ for all $\tilde w\in W$, where $w' \in W'$ maps to $ w$ under $W'\rightarrow  W$.  
Since we have shown  that for $j=1,2$ there is some $i_j\in\{1,2\}$ so that    $\operatorname{im}(\varphi_{w,i_j,j})=C_{w,i_j,j}$ is  a curve, \eqref{eq:im(d_i)''} establishes that $V$ has a curve summand, providing  the desired  contradiction.
\end{proof}

\subsection{Ruling out $\deg(g)=3$} \label{subsec:deg=3}
In this section we will derive a contradiction from the assumption that $\deg(g)=3$. 
By the same argument as at the beginning of \S\ref{subsec:deg=2'}, $\deg(g)=3$ implies  $(A,\Theta)$ is not isomorphic to the  Jacobian of a smooth curve, and  that $V$ does not admit  a curve summand.

We can further assume that $V$ is not a translate of $-V$.
Indeed, if $V$ were a translate of $-V$, then up to translation we may assume $V=-V$.
But then the difference map $g$ would factor through the involution 
$$V\times V\longrightarrow V\times V, \ \ (v_1,v_2)\mapsto (-v_2,-v_1), $$
 and so $g$ could not have degree $3$.

In order to present the main argument in the case $\deg(g)=3$, we introduce some notation.
For general $(v_1,v_2)\in V\times V$, the preimage $g^{-1}(v_1-v_2)$ consists of three points, which we denote by
\[
g^{-1}(v_1-v_2)=\left\{(v_1,v_2),(v_3,v_4),(v_5,v_6)\right\} .
\]
That is,
\[
v_1-v_2=v_3-v_4=v_5-v_6 .
\]
Also, for a general point $v_1\in V$ we will use the notation
\[
g^{-1}(v_1-V)^{\red}=(v_1\times V) \cup R_{v_1}\cup Q
\]
with $g(Q)\subsetneq v_1-V$, and where each component of $R_{v_1}$ dominates $v_1-V$ via $g$.

\begin{lemma} \label{lem:C(v_1)}
In the above notation, 
\begin{enumerate}
\item  If we fix a general point $v_1\in V$ and move $v_2\in V$, then the points $v_4$ and $v_6$ stay in an irreducible curve $C(v_1)\subseteq V$, whereas $v_3$ and $v_5$ sweep out $V$; i.e., $\operatorname{pr}_2(R_{v_1})=C(v_1)$ and $\operatorname{pr}_1(R_{v_1})=V$.  

\item For general $v_1$, we have that $R_{v_1}$ is irreducible.

\item There is a dense open susbset $U\subseteq V$ such that $\displaystyle \overline{\bigcup _{v_1\in U}C(v_1)}=V$.   
\end{enumerate}
\end{lemma}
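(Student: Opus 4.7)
The plan is to analyze $R_{v_1}$, which has dimension $2$ since the degree-$3$ map $g$ has the sheet $v_1\times V$ accounting for one of the three sheets, leaving $g|_{R_{v_1}}\colon R_{v_1}\to v_1-V$ generically finite of degree $2$.

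For Part (1), the heart of the argument is that $\deg(+\colon V\times V\to V+V)=2$. The swap involution $\sigma$ on $V\times V$ satisfies $+\circ\sigma=+$, and for generic $c\in V+V$ it acts freely on $+^{-1}(c)$, since a fixed point $(z,z)$ would force $c=2z\in 2V$, but $2V$ has dimension $2$ while $V+V$ has dimension $4$. Hence $\deg(+)$ is even, giving $\deg(+)\in\{2,6\}$; and $\deg(+)=6$ is excluded because then $[V+V]=\theta$ would force $V+V$ to be a translate of $\Theta$, making $(A,\Theta)$ a Jacobian by \theoremref{thm:V=W}, contradicting our reductions. So $\deg(+)=2$, and for general $y\in V$ we have $V\cap((v_1+y)-V)=\{v_1,y\}$. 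The corresponding two points of $S_{v_1}:=g^{-1}(v_1-V)$ above $y$ via $\operatorname{pr}_2$ are $(v_1,y)\in v_1\times V$ and $(y,y)\in\Delta_V\subseteq Q$, neither in $R_{v_1}$; thus $\operatorname{pr}_2(R_{v_1})\subsetneq V$. That $\operatorname{pr}_2(R_{v_1})$ is not a point is immediate: otherwise $g(R_{v_1})=v_1-V$ would be a translate of $V$, contradicting that it is a translate of $-V$ combined with our reduction that $V$ is not a translate of $-V$. Hence $C(v_1):=\operatorname{pr}_2(R_{v_1})$ is a curve. Similarly, if $\operatorname{pr}_1(R_{v_1})$ were a curve $C_1$, then $R_{v_1}\subseteq C_1\times C(v_1)$ would yield $v_1-V=g(R_{v_1})\subseteq C_1-C(v_1)$, and by equality of $2$-dimensional irreducible subvarieties this gives $V=(v_1-C_1)+C(v_1)$, exhibiting a curve summand of $V$ and contradicting our reduction. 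So $\operatorname{pr}_1(R_{v_1})=V$.

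For Part (2), I argue by contradiction. If $R_{v_1}=R_{v_1,1}\cup R_{v_1,2}$ is reducible for generic $v_1$, each component has dimension $2$ and is birational to $v_1-V$ via $g$. Consider the family $\mathscr R\to V$ with generic fiber $R_{v_1}$: its geometric generic fiber has two components, so after a generically finite base change $V'\to V$ of degree $1$ or $2$, the family $\mathscr R_{V'}$ splits as $\mathscr R_{V',1}\cup\mathscr R_{V',2}$ with each component birational to $V\times V'$. As in \lemmaref{lem:R_w=irred'}, composing the birational equivalences with the projections $\operatorname{pr}_i$ produces rational maps $V\times V'\dashrightarrow V$, which decompose via the Albanese as $\varphi_{i,j}+c_{i,j}$ with $\varphi_{i,j}\colon V\to A$ and $c_{i,j}\colon V'\to A$. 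By Part (1), the projection $\operatorname{pr}_2$ of each component $R_{v_1,j}$ has image inside the curve $C(v_1)$, hence is one-dimensional; this forces $\operatorname{im}(\varphi_{2,j})$ to be a curve, so the decomposition $V=\operatorname{im}(\varphi_{2,j})+\operatorname{im}(c_{2,j})$ exhibits a curve summand of $V$, contradicting our reduction.

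For Part (3), suppose for contradiction that $S:=\overline{\bigcup_{v_1\in U}C(v_1)}$ is a proper subvariety of $V$. Then $S$ has dimension $1$, and since each irreducible curve $C(v_1)$ is a component of $S$, a connectedness argument forces $C(v_1)$ to equal a fixed irreducible component $S_0\subseteq S$ for all $v_1\in U$ generic. Hence $R_{v_1}\subseteq V\times S_0$, so $g(R_{v_1})=v_1-V\subseteq V-S_0$; taking the union over $v_1$ yields $V-V\subseteq V-S_0$, a contradiction since $\dim(V-V)=4$ while $\dim(V-S_0)\le\dim V+\dim S_0=3$. The most delicate step is Part (2), where the Albanese bookkeeping crucially uses the projection asymmetry established in Part (1).
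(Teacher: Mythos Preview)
Your argument is correct and follows essentially the paper's approach: the key step in (1) is the same observation that $+\colon V\times V\to V+V$ has even degree and that degree $6$ is excluded by \theoremref{thm:V=W} (the paper phrases this as a contradiction from assuming $\operatorname{pr}_2(R_{v_1}')=V$, while you first prove $\deg(+)=2$ and then compute the $\operatorname{pr}_2$-fibers directly), part (2) invokes the Albanese/base-change argument of \lemmaref{lem:R_w=irred'} exactly as the paper does, and your contradiction for (3) via $\dim(V-S_0)\le 3$ is a clean variant of the paper's direct check that every general $v$ lies in some $C(v_1)$. One small omission: in (1) you should also rule out $\operatorname{pr}_1(R_{v_1})$ being a single point $p$ (then $R_{v_1}=p\times V$ gives $p-V=v_1-V$, hence $p=v_1$ since $\Theta=V+W$ has trivial stabilizer, contradicting the definition of $R_{v_1}$), and note that the \emph{irreducibility} of $C(v_1)$ asserted in (1) only follows once (2) is in hand.
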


\begin{proof}
We fix a general point $v_1\in V$ and use the notation introduced above.
Since for general $v_2\in V$, the preimage $g^{-1}(v_1-v_2)$ is given by three distinct points, the restriction
$$
g|_{R_{v_1}}:R_{v_1}\longrightarrow v_1-V
$$ 
has degree two. 
In particular, $R_{v_1}$ is nonempty.

Let now $R_{v_1}'\subseteq R_{v_1}$ be an irreducible component.
Since $V$ is not a translate of $-V$, ${\rm pr}_2(R_{v_1}')$ is not a point.
On the other hand, suppose that ${\rm pr}_2(R_{v_1}')$ is equal to $V$. 
Then for general $v_2\in V$ with $v_2\neq v_1$, there are $v_3,v_4\in V$ with $v_1-v_2=v_3-v_4$ and $v_1\neq v_3$, such that $v_4$ is a general point of $V$.
Hence, 
\[
v_1+v_4=v_2+v_3 ,
\]
with $[v_1,v_4]\neq [v_2,v_3]$, and where $v_1+v_4$ is a general point of $V+V$.    
Since $V\times V\longrightarrow V+V$ factors through $V^{(2)}$, it has by \lemmaref{lem:deg(f)} either degree $2$ or $6$ and so the above observation shows that it has degree $6$. 
Thus $V+V$ is a translate of $\Theta$, which contradicts $\deg(g)=3$ by  \theoremref{thm:V=W}. 

We have thus shown that ${\rm pr}_2(R_{v_1}')$ is a curve for each irreducible component $R_{v_1}'$ of $R_{v_1}$.
In the next step, we aim to prove that ${\rm pr}_2(R_{v_1})$ is irreducible.
For this it suffices to see that $R_{v_1}$ is irreducible.
If it is not, then as $g|_{R_{v_1}}:{R_{v_1}}\longrightarrow v_1-V$ has degree two, each component of ${R_{v_1}}$ is birational to $V$.
Since we know that for each component $R_{v_1}'$ of ${R_{v_1}}$, the projection ${\rm pr}_2(R_{v_1}')$ is an irreducible curve, the same argument as in the proof of \lemmaref{lem:R_w=irred'} then shows that $V$ has a curve summand, which contradicts our assumptions. 

It remains to see that $v_3$ (resp.\ $v_5$) sweeps out $V$ as $v_2$ moves.
If this were not true,  $\operatorname{pr}_i(R_{v_1})$ would be a curve not only for $i=2$ but also for $i=1$. 
Since we already know that ${R_{v_1}}$ is irreducible, it would follow that the natural inclusion  ${R_{v_1}}\subseteq \operatorname{pr}_1({R_{v_1}})\times \operatorname{pr}_2({R_{v_1}})$ is an equality so that $R_{v_1}$  is a product of curves.
Applying $g$ then shows that $V$ has a curve summand, which contradicts our assumptions.
This proves (1) of the lemma.

We have already shown (2), and (3) follows directly from the definition of $C(v_1)$, as for general $v\in V$, one can find $v_1$ with $v\in C(v_1)$.  
\end{proof}

For any general point $v_1\in V$, there is by \lemmaref{lem:C(v_1)} a rational map 
$$\phi_{v_1}: V\dashrightarrow  C(v_1)^{(2)}, \,\,\,\,\,\,\,\, v_2\mapsto [v_4,v_6].$$

\begin{lemma} \label{lem:phi_v1}
For general $v_1\in V$, the rational map $\phi_{v_1}$ is dominant. 
\end{lemma}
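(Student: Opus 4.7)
The plan is to argue by contradiction: assuming $\phi_{v_1}$ is not dominant, I will derive that $V$ has a curve summand, contradicting the standing reduction at the start of Section~\ref{subsec:deg=3}.

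First I would reinterpret $\phi_{v_1}$ via the double-cover structure. Since $\deg(g)=3$ and the component $v_1\times V$ of $g^{-1}(v_1-V)$ contributes degree $1$, the restriction $g|_{R_{v_1}}\colon R_{v_1}\to v_1-V$ is a degree-$2$ cover of irreducible surfaces (by \lemmaref{lem:C(v_1)}), carrying a birational involution $\iota$ that exchanges the two sheets. Then $\phi_{v_1}$ factors as $V\dashrightarrow v_1-V\dashrightarrow C(v_1)^{(2)}$, the second map sending $v_1-v_2\mapsto[\operatorname{pr}_2(x),\operatorname{pr}_2(\iota(x))]$ for any $x\in(g|_{R_{v_1}})^{-1}(v_1-v_2)$. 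Consequently $\phi_{v_1}$ is dominant if and only if the rational map $\Xi\colon R_{v_1}\dashrightarrow C(v_1)\times C(v_1)$, $x\mapsto(\operatorname{pr}_2(x),\operatorname{pr}_2(\iota(x)))$, is dominant as a map of surfaces.

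I would then observe that non-dominance of $\Xi$ forces $\iota$ to descend to a non-trivial rational involution $\bar\iota$ of $C(v_1)$. Indeed, for general $v_4\in C(v_1)$ the fiber $\operatorname{pr}_2^{-1}(v_4)$ is an irreducible curve in $R_{v_1}$, and the restriction of $\operatorname{pr}_2\circ\iota$ to it is either surjective onto $C(v_1)$---in which case $\Xi$ is dominant and we are done---or constant, yielding $\bar\iota$ with $\operatorname{pr}_2\circ\iota=\bar\iota\circ\operatorname{pr}_2$. The alternative $\bar\iota=\operatorname{id}$ is impossible: it would force the two preimages of a general $v_1-v_2$ in $R_{v_1}$ to share their second coordinate, and thus to coincide. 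Setting $C'=C(v_1)/\bar\iota$, the composite $R_{v_1}\xrightarrow{\operatorname{pr}_2}C(v_1)\to C'$ is $\iota$-invariant, so it descends to a morphism $v_1-V\to C'$, yielding a surjective rational map $\psi\colon V\dashrightarrow C'$ with $1$-dimensional fibers. Unwinding the definitions, the fiber of $\psi$ over $[v_4]$ is (a translate of) the ``excess-intersection'' curve $C^{v_4}:=V\cap((v_1+v_4)-V)$.

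Finally I would extract a curve summand of $V$ from $\psi$, giving the desired contradiction. The rational map $\psi$ induces, via the Albanese of $V$ and the fact that $V$ generates $A$, a surjection $A\to B$ onto an abelian quotient with $\psi(V)\subseteq B$ a curve. Using that the fibers $C^{v_4}$ of $\psi$ arise from a $1$-parameter family of translates of $-V$ indexed by $v_4\in C(v_1)$, one shows they are all translates of a single curve $F\subseteq A$, whence $V=F+\psi(V)$ as subvarieties of $A$, producing a curve summand of $V$. The hardest step is this last one: showing that the fibers of $\psi$ are translates of a common curve. This requires combining the explicit defining equation $V\cap((v_1+v_4)-V)$ of the $C^{v_4}$ with the $1$-parameter translation structure $v_4\mapsto v_1+v_4$ on the base, and using the universal property of the Albanese to transport one fiber onto another by a translation in $A$.
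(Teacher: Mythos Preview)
Your first four steps are essentially sound (modulo minor points: you implicitly assume the general fibre of $\operatorname{pr}_2\colon R_{v_1}\to C(v_1)$ is irreducible, and you never check that $C'$ has positive genus, without which there is no Albanese map to invoke). The real problem is step~5. You assert that the fibres $C^{v_4}=V\cap((v_1+v_4)-V)$ of $\psi\colon V\dashrightarrow C'$ are all translates of a single curve $F\subseteq A$, appealing to the universal property of the Albanese. But the Albanese of $V$ is $A$ itself, and the induced surjection $A\to JC'$ (when it exists) only tells you that the fibres of $\psi$ lie in cosets of the fixed abelian subvariety $K=\ker(A\to JC')$; it says nothing about the intersections $V\cap(\text{coset of }K)$ being translates of one another. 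The extra structure you emphasise---that $(v_1+v_4)-V$ is a one-parameter family of \emph{translates} of $-V$---does not help either: those translates move, but $V$ stays fixed, so the intersections $C^{v_4}$ can genuinely vary in moduli. There is no mechanism here that ``transports one fibre onto another by a translation in $A$'', and without it the curve-summand contradiction does not follow.

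The paper avoids this difficulty by a short symmetry argument that does not go through curve summands at all. After ruling out the case where $\operatorname{im}(\phi_{v_1})$ is a point, one observes that the triple $g^{-1}(v_1-v_2)=\{(v_1,v_2),(v_3,v_4),(v_5,v_6)\}$ can equally be read as $g^{-1}(v_4-v_3)=\{(v_4,v_3),(v_2,v_1),(v_6,v_5)\}$, so \lemmaref{lem:C(v_1)} applies with $v_4$ in the role of the base point: for fixed general $v_4$ and varying $v_3$, the point $v_1$ is confined to the curve $C(v_4)$ while $v_6$ sweeps out all of $V$. On the other hand, if $\operatorname{im}(\phi_{v_1})\subset C(v_1)^{(2)}$ is a curve for general $v_1$, then intersecting it with $[v_4\times C(v_1)]$ is finite (else $v_4$ would be fixed and $V$ a translate of $-V$), so $v_6$ is determined by $(v_1,v_4)$ up to finite ambiguity. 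For fixed $v_4$ this forces the locus of possible $v_6$ to be at most one-dimensional, contradicting the previous sentence.
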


\begin{proof} 
We denote by $\im(\phi_{v_1})$ the (closure of the) image of $\phi_{v_1}$.
That is, 
$$
\im(\phi_{v_1})=\overline{\phi_{v_1}(U)}\subseteq C(v_1)^{(2)}
$$ 
for some Zariski open and dense subset $U\subset V$ on which $\phi_{v_1}$ is defined.

Let us first prove that $\im(\phi_{v_1})$ is not a point.
That is, we want to prove that for $v_2\in V$ general, $\phi_{v_1}(v_2)=[v_4,v_6]$ is not constant as $v_2$ moves.
If it were, then $v_4$ and $v_6$ would not move, although $v_2$ does.
This would imply that ${R_{v_1}}$ is reducible (and also that $V$ is a translate of $-V$), which contradicts our assumptions. 

It remains to prove that $\im(\phi_{v_1})$ is not a curve.
For a contradiction, we assume that $\im(\phi_{v_1})$ is a curve for general $v_1\in V$.

For a general point  $(v_1,v_2)\in V\times V$, let us again look at
\begin{align} \label{eq:v1-v2}
g^{-1}(v_1-v_2)=\left\{(v_1,v_2),(v_3,v_4),(v_5,v_6)\right\} .
\end{align}
Since $\im(\phi_{v_1})\subset C(v_1)^{(2)}$ is an irreducible curve, it follows that the intersection
$$
[v_4\times C(v_1)]\cap \im(\phi_{v_1})
$$
is given by a finite set of points.
Indeed, if this was not the case, then $ \im(\phi_{v_1})= [v_4\times C(v_1)]$ which implies that if we fix $v_1$ and move $v_2$, then $v_4$ does not move and so $V$ is a translate of $-V$, which contradicts our assumptions.
We have thus proven that the above intersection is finite, which means that $v_6$ is determined by $v_1$ and $v_4$ up to finite ambiguity.

On the other hand, $v_1-v_2=v_3-v_4$ is a general point of $V-V$.
Therefore, $v_4-v_3\in V-V$ is a general point and so we can compute its preimage via $g$ as 
\begin{align} \label{eq:v4-v3}
g^{-1}(v_4-v_3)=\left\{(v_4,v_3),(v_2,v_1),(v_6,v_5)\right\} .
\end{align}
Since $v_4-v_3$ is a general point of $V-V$, it follows that $v_4$ is a general point of $V$.\footnote{The conclusion that for general $(v_1,v_2)\in V\times V$, the point $v_4$ is a general point of $V$ is equivalent to saying that $v_4$ sweeps out $V$ as we move $(v_1,v_2)$ in $V\times V$.
This should not be confused with \lemmaref{lem:C(v_1)}, where we have seen that $v_4$ stays in a curve if we fix $v_1$ and move $v_2$.} 
Let us now fix the general point $v_4$ and move $v_3\in V$.
It then follows from \lemmaref{lem:C(v_1)} that $v_1,v_5$ stay in the irreducible curve $C(v_4)$, whereas $v_2$ and $v_6$ sweep out $V$.

Note that locally analytically, the labeling of points in \eqref{eq:v1-v2} and \eqref{eq:v4-v3} is consistent, and the discussion above shows that fixing $v_4$, then for small changes in $v_3$, the point $v_1$ moves in a one-dimensional family along the curve $C(v_4)$ while $v_6$ moves in a two-dimensional family.
This is a contradiction, because we have seen earlier that for fixed $v_4$, the point  $v_6$ is determined up to finite ambiguity by $v_1$ and so, for fixed $v_4$, the locus of all possible $v_6$'s is one-dimensional, since $v_1\in C(v_4)$ can only move along a curve.
This  finishes the proof of the lemma.
\end{proof}

Let $\widetilde{C}(v_1)$ denote the normalization of $C(v_1)$.
By \lemmaref{lem:phi_v1}, $\phi_{v_1}$ induces a dominant rational map 
$$ 
V\dashrightarrow \widetilde{C}(v_1)^{(2)}.
$$
Since $H^1$ is a birational invariant of smooth varieties, this rational map induces an injective morphism of integral Hodge structures
\[
H^1(\widetilde{C}(v_1),\ZZ)=H^1(\widetilde{C}(v_1)^{(2)},\ZZ)\longrightarrow H^1(V,\ZZ) ,
\]
which induces an inclusion of the family of Jacobians $J(\widetilde C(v_1))$ in the fixed abelian variety $\operatorname{Pic}^0(V)$. 
This implies that the abelian varieties $J(\widetilde C(v_1))$ are all isomorphic, as $v_1$ moves; since polarizations of a fixed abelian variety are discrete, we can then use the Torelli theorem to conclude that the family of curves $(\widetilde{C}(v_1))_{v_1\in U}$, where $v_1$ runs through the points of some Zariski open and dense subset $U\subseteq V$, is isotrivial.  Let us denote this isotrivial  family of curves $\mathscr C\to U$, and fix $\widetilde C\cong \widetilde C(v_1)$ for some (any) $v_1\in U$.

There exists a generically finite morphism $V'\rightarrow V$ such that the pullback of  $\mathscr C\to U$  to some dense open subset $U'\subseteq V'$ is trivial.  In other words, 
$\mathscr C\times_U U'\cong U'\times \widetilde C$.  We restrict the  trivial family  $V'\times C'$ to a general curve $B\subseteq V'$ to obtain the family $B\times \widetilde C\to B$, and then consider the rational map $B\times \widetilde C \dashrightarrow V$ which is fiberwise over $v_1\in B\cap U'$  induced by 
$$
\widetilde C\cong \widetilde C(v_1)\longrightarrow C(v_1)\subseteq V.
$$
As $B$ is general, $\bigcup _{v_1\in B\cap U'} C(v_1)$ is dense in $V$ by \lemmaref{lem:C(v_1)}.  
Consequently,  $B\times \widetilde C \dashrightarrow V$  is  a dominant rational map from a product of curves to $V$, and so $V$ has a curve summand; see for instance \cite[Corollary 22]{Schreieder1}.
This contradicts our assumptions, which finishes the proof of \theoremref{thm:degg}, and hence also the proof of \theoremref{main}.

\begin{remark}
The smoothness assumption on $V$ and $W$ in \theoremref{thm:degg} is not necessary.
This follows from \theoremref{thm:V=sing}, but also from the proof of \theoremref{thm:degg} given above, which can easily be adapted to the case where $V$ and $W$ are allowed to be singular.
\end{remark}

\section*{References}
\begin{biblist}

\bib{Beauville1}{article}{
      author={Beauville, Arna{-}ud},
      title={Prym varieties and the Schottky problem},
	journal={Invent. Math.}, 
	number={41},
	date={1977}, 
	pages={149--196},
}
\bib{Beauville2}{book}{
        author={Beauville, Arnaud},
        title={Surfaces alg\'ebriques complexes},
        series={Ast\'erisque}, 
        number={54}, 
        publisher={Soc. Math. de France}, 
        date={1978},
}
\bib{Beauville3}{article}{
      author={Beauville, Arna{-}ud},
      title={L'In\'egalit\'e $p_g \ge 2q - 4$ pour les surfaces de type g\'en\'eral, \emph{Appendix to ``In\'egalit\'es num\'eriques
      pour les surfaces de type g\'en\'eral" by O. Debarre}},
	journal={Bull.  Soc. Math. Fr.}, 
	number={110},
	date={1982}, 
	pages={343--346},
}
\bib{BL}{book}{
        author={Birkenhake, Christina},
        author={Lange, Herbert},
        title={Complex abelian varieties, 2nd ed.},
        publisher={Springer, Heidelberg}, 
        date={2004},
}
\bib{Casalaina1}{article}{
        author={Casalaina-Martin, Seba{-}stian},
        title={Cubic threefolds and abelian varieties of dimension five II},
        journal={Math. Z.}, 
        number={260}, 
        date={2008},
        pages={115--125},
}
\bib{Casalaina2}{article}{
        author={Casalaina-Martin, Sebastian},
        title={Singularities of the Prym theta divisor},
        journal={Ann. Math.}, 
        number={170}, 
        date={2009},
        pages={162--204},
}
\bib{CG}{article}{
      author={Clemens, Herb},
      author={Griffiths, Phillip},
	title={The intermediate Jacobian of the cubic threefold},
	journal={Ann. Math.}, 
	number={95},
	date={1972}, 
	pages={281--356},
}
\bib{Debarre}{article}{
        author={Debarre, Olivier},
        title={Minimal cohomology classes and Jacobians},
        journal={J. Algebraic Geom.}, 
        number={4}, 
        date={1995},
        pages={321--335},
}
\bib{Debarre2}{book}{
        author={Debarre, Oliv{-}ier},
        title={Complex tori and abelian varieties},
        series={SMF/AMS Texts and Monographs}, 
        number={11},
        publisher={Amer. Math. Soc., Providence RI}, 
        date={2005},
}
\bib{EL}{article}{
      author={Ein, Lawrence},
      author={Lazarsfeld, Robert},
	title={Singularities of theta divisors and the birational geometry of irregular varieties},
	journal={J. Amer. Math. Soc.}, 
	number={10},
	date={1997}, 
	pages={243--258},
}
\bib{Fulton}{book}{
        author={Fulton, William},
        title={Intersection theory, 2nd ed.},
        series={Ergebnisse der Mathematik und  ihrer Grenzgebiete}, 
        publisher={Springer-Verlag, Berlin}, 
        date={1998},
}
\bib{harris}{book}{
    AUTHOR = {Harris, Joe},
     TITLE = {Algebraic geometry},
    SERIES = {Graduate Texts in Mathematics},
    VOLUME = {133},
      NOTE = {A first course,
              Corrected reprint of the 1992 original},
 PUBLISHER = {Springer-Verlag, New York},
      YEAR = {1995},
     PAGES = {xx+328},
      ISBN = {0-387-97716-3},
   MRCLASS = {14-01},
  MRNUMBER = {1416564 (97e:14001)},
}
\bib{Hoering1}{article}{
      author={H\"oring, Andreas},
	title={$M$-regularity of the Fano surface},
	journal={C. R. Math. Acad. Sci. Paris}, 
	number={344},
	date={2007}, 
	pages={691--696},
}
\bib{Hoering2}{article}{
      author={H\"oring, Andr{-}eas},
	title={Minimal classes on the intermediate Jacobian of a generic cubic threefold},
	journal={Comm. Contemp. Math.}, 
	number={12},
	date={2010}, 
	pages={55-70},
}
\bib{LT}{article}{
      author={Lombardi, Luigi},
      author={Tirabassi, Sofia},
	title={$GV$-subschemes and their embeddings in principally polarized abelian varieties},
	journal={Math. Nachrichten}, 
	number={288},
	date={2015}, 
	pages={1405--1412},
}
\bib{Mumford}{article}{
      author={Mumford, David},
	title={Prym varieties I},
	journal={in \emph{Contributions to Analysis (a collection of papers
dedicated to Lipman Bers)}, New York, Academic Press}, 
	date={1974}, 
	pages={325-350},
}
\bib{PP1}{article}{
      author={Pareschi, Giuseppe},
      author={Popa, Mih{-}nea},
	title={Regularity on abelian varieties I},
	journal={J. Amer. Math. Soc.}, 
	number={16},
	date={2003}, 
	pages={285--302},
}
\bib{PP2}{article}{
      author={Pareschi, Giuseppe},
      author={Popa, Mihnea},
	title={Castelnuovo theory and the geometric Schottky problem},
	journal={J. Reine Angew. Math.}, 
	number={615},
	date={2008}, 
	pages={25--44},
}
\bib{PP3}{article}{
      author={Pareschi, Giuseppe},
      author={Popa, Mih{-}nea},
	title={Generic vanishing and minimal cohomology classes on abelian varieties},
	journal={Math. Ann.}, 
	number={340},
	date={2008}, 
	pages={209--222},
}
\bib{PP4}{article}{
      author={Pareschi, Giuseppe},
      author={Popa, Mihnea},
	title={$GV$-sheaves, Fourier-Mukai transform, and generic vanishing},
	journal={Amer. J. Math.}, 
	number={133},
	date={2011}, 
	pages={235--271},
}
\bib{Ran}{article}{
      author={Ran, Ziv},
      	title={On subvarieties of abelian varieties},
	journal={Invent. Math.}, 
	number={62},
	date={1981}, 
	pages={459--479},
}
\bib{Ran2}{article}{
      author={Ran, Zi{-}v},
      	title={A characterization of five-dimensional Jacobian varieties},
	journal={Invent. Math.}, 
	number={67},
	date={1982}, 
	pages={395--422},
} 
\bib{Schreieder1}{article}{
      author={Schreieder, Stefan},
      	title={Theta divisors with curve summands and the Schottky problem},
	journal={preprint: arXiv:1409.3134 (to appear in Math.\ Ann.)}, 
	date={2014}, 
}
\bib{Schreieder2}{article}{
      author={Schreieder, Stef{-}an},
      	title={Decomposable theta divisors and generic vanishing},
	journal={preprint: arXiv:1602.06226 (to appear in IMRN)}, 
	date={2016}, 
}

\end{biblist}
\end{document}